\newcommand{\R}{\mathbb R}
\newcommand{\T}{\mathbb T}
\newcommand{\p}{\partial}
\newcommand{\secao}[1]{\section{#1}\setcounter{equation}{0}}
\newtheorem{theorem}{Theorem}[section]
\newtheorem{proposition}[theorem]{Proposition}
\newtheorem{remark}[theorem]{Remark}
\newtheorem{lemma}[theorem]{Lemma}
\begin{document}
%\today
\title[Global well-posedness]{Global well-posedness for a coupled modified KdV system}
\author{Ad\'an J. Corcho}
\address{Instituto de Matem\'atica, Universidade Federal do Rio de Janeiro-UFRJ. Ilha do Fund\~{a}o,
21945-970. Rio de Janeiro-RJ, Brazil}
\email{adan.corcho@pq.cnpq.br}
\thanks{A. J. Corcho was supported by CAPES and CNPq, Brazil.}
\author{Mahendra Panthee}
\address{Centro de Matem\'atica, Universidade do Minho, 4710-057, Braga, Portugal}
\email{mpanthee@math.uminho.pt}
\thanks{M. Panthee was partially supported by FCT/CAPES project and  the Research Center of Mathematics of the University of Minho, Portugal through the FCT Pluriannual Funding Program and through the project PTDC/MAT/109844/2009.}

\keywords{Korteweg-de Vries equation, Cauchy problem,
local and global  well-posedness}
\subjclass[2000]{35Q35, 35Q53}

\begin{abstract}
We prove the sharp global well-posedness results for the initial value problems (IVPs) associated to the modified Korteweg-de Vries (mKdV) equation and a system modeled by the coupled modified Korteweg-de Vries equations (mKdV-system). To obtain our results we use the second generation of the modified energy and almost conserved quantities, more precisely, the refined $I$-method introduced by Colliander, Keel, Staffilani, Takaoka and Tao in \cite{CKSTT, CKSTT-2}.
\end{abstract}

\maketitle

%%%%%%%%%%%%%%%%%%%%%%%%%%
\secao{Introduction}
%%%%%%%%%%%%%%%%%%%%%%%%%%
We consider the initial value problems (IVPs) associated to the modified Korteweg-de Vries (mKdV) equation
\begin{equation}\label{ivpmkdv}
\p_tu + \p_x^3u +\p_x(u^3)=0,\quad u(x,0)=\phi(x),
\end{equation}
and system of the mKdV equations
\begin{equation}\label{ivp-sy}
\begin{cases}
\p_tu + \p_xu + \p_x(uv^2) =0,&u(x,0)=\phi(x),\\
\p_tv + \p_x^3v + \p_x(u^2v) =0,& v(x,0)=\psi(x),\\
\end{cases}
\end{equation}
where $(x, t) \in \R\times \R$; $u= u(x, t)$ and $v= v(x, t)$ are real-valued functions.

\subsection{Brief review about well-posedness} An extensive study of the  IVP \eqref{ivpmkdv} can be found in the literature, see for example the works \cite{KPV7, KPV3, FLP1} and references therein. The mKdV equation is a completely integrable model and has also been studied in the inverse scattering theory, \cite{Miura, Scott}. The system \eqref{ivp-sy}  contains a pair of mKdV type equations
coupled through nonlinear parts and is a special case of a broad class of nonlinear evolution equations considered by Ablowiz, Kaup, Newell and Segur \cite{AKNS} in the inverse scattering context.

Kenig, Ponce and Vega \cite{KPV3} proved that the IVP \eqref{ivpmkdv} is locally well-posed for given data in $ H^s(\R)$,
 $s\geq \frac{1}{4}$.  To obtain this result, they used the sharp version of the smoothing effects of Kato type (see \cite{Kato1}) satisfied by the group associated to the linear problem combined with the contraction
 mapping principle. This local result is sharp. Note that the conservation laws
 \begin{equation}\label{con.1}
M(t):= \frac12 \|u(t)\|_{L^2(\R)}^2, \qquad (Mass)
\end{equation}
\begin{equation}\label{con.2}
E(t):= \frac12 \|u_x(t)\|_{L^2(\R)}^2-\frac1{12}\|u(t)\|_{L^4(\R)}^4, \qquad (Energy)
\end{equation}
satisfied by the mKdV flow permit to extend the local solution to the global one in $H^s(\R)$, $s\geq 1$.

Following the similar argument used in \cite{KPV3}, Montenegro (see \cite{Monte}) proved that the IVP (\ref{ivp-sy}) is locally
well-posed for given data $(\phi, \psi)$ in $H^s(\R)\times H^s(\R)$, $s\geq \frac{1}{4}$. Moreover, using the
conservation laws
\begin{equation}\label{con21}
I_1(u, v) := \int_{\R} (u^2 +v^2)\,dx
\end{equation}
and
\begin{equation}\label{con22}
I_2(u, v) := \int_{\R} (u_x^2 + v_x^2 - u^2v^2)\, dx,
\end{equation}
satisfied by the flow of (\ref{ivp-sy}), the local solution can be extended to a global one for given data in $H^s(\R)\times H^s(\R)$, $s\geq 1$.

 The system \eqref{ivp-sy} has also been studied from the point of view of the abstract stability theory of Gri\-llakis, Shatah and Strauss developed in  \cite{GSS1}. Using this theory,  Montenegro  (see \cite{Monte}) proved the orbital stability of solitary wave solutions to the IVP (\ref{ivpmkdv}). Recently, Alarcon, Angulo and Montenegro (see \cite{AGM1}) considered a general class of nonlinear dispersive system containing the IVP (\ref{ivpmkdv}) and proved existence, orbital stability and nonlinear instability of solitary wave solutions. To get existence and stability results the  used the concentration-compactness method and to get nonlinear instability they followed a method established by Bona, Souganidis and Strauss in \cite{BSS1} to analyze the instability of solitary waves
 of the KdV type equations.

As discussed above, for the both IVPs \eqref{ivpmkdv} and \eqref{ivp-sy}, there is a gap in Sobolev indices to get global solution for which local solution already exists. In the range  of the Sobolev indices $\frac14 \leq s <1$, one cannot obtain an {\em a priori} estimate to prove global well-posedness with the usual iteration process. There are several attempts to overcome this difficulty so as to get obtain global well-posedness for the IVP \eqref{ivpmkdv} for $s\geq \frac14$.

A pioneer technique  to get the global solution for given data below energy spaces was introduced by Bourgain in \cite{B20}. This technique consists of splitting the given  data in low and high frequency parts and resolving auxiliary IVPs with new sets of data there by  creating an iteration process in the energy space to extend  the local in time solution to the global one. Several authors have  applied this technique to obtain the global solution to various nonlinear dispersive models.  Fonseca, Linares and
Ponce (see \cite{FLP1}) simplified this technique to get the global solution to the mKdV equation in $H^s(\R)$, $s > \frac{3}{5}$.  It is also applied to get the global solutions to the semi-linear wave equations  (see \cite{KPV8}) and critical generalized KdV equations (see \cite{FLP2}).  Also, Takaoka used this technique to get the global solutions to KP-II equation in
\cite{TAKA2} and to the Schr\"{o}dinger equation with derivative in \cite{TAKA}.  Further, Pecher (see \cite{HP}) followed the same technique to prove the global well-posedness for the 1D Zakharov system below the energy space. Recently, using the argument in \cite{FLP1}, Carvajal  (see \cite{CL10}) proved that the IVP associated to the higher order nonlinear Schr\"odinger equation is globally well-posed in $H^s(\R)$, $s > \frac{5}{9}$.

The high-low frequency technique introduced in \cite{B20} is not strong enough to obtain global solution to the full range of Sobolev indices for which local solution exists. Recently, Co\-lliander, Keel, Staffilani, Takaoka and Tao ( see \cite{CKSTT, CKSTT-2}) introduced the so called $I$ operator method and almost conserved quantities to obtain global well-posedness of the Cauchy problem where  no conserved quantities are available. This method has been very successful to get sharp global result for several dispersive models, \cite{CKSTT-3, CKSTT-4, Farah} are just a few to mention.

The authors in \cite{CKSTT-2} used the so called $I$ operator method and almost conserved quantities to obtain sharp  global well-posedness results for the KdV and mKdV equations in real line as well as in periodic setting. We note that, the KdV and mKdV equations are related by the Miura's transform, which is a nonlinear mapping and involves derivative. In \cite{CKSTT-2}, the authors first obtained sharp global solution in $H^s(\R)$, $s> -\frac34$, for the KdV equation and then used Miura's transform to prove global well-posedness for the mKdV equation \eqref{ivpmkdv} for data in $H^s(\mathbb{R})$, $s> \frac14$.

\subsection{Main Results}

In this work, our objective is two fold: first we want to prove that the IVP \eqref{ivpmkdv} is globally well-posed  for data in $H^s(\mathbb{R})$, $s> \frac14$, without using the Miura's transform, then use this technique to get the similar global well-posedness result for the mKdV system
\eqref{ivp-sy}.

As far as we know, there is no Miura's transform available to treat the global well-posedness for the system \eqref{ivp-sy}. So it is necessary to develop a method that addresses this global well-posedness issue directly.

In what follows we state the main results of this work. Our first main result is concerned with the global well-posedness to the mKdV equation (\ref{ivpmkdv}) and reads as follows:
\begin{theorem}\label{mkdvglob}
For any $\phi \in H^s(\R),\,s>\frac14$, the unique local solution
 to the IVP (\ref{ivpmkdv}) provided by Theorem \ref{loca} extends to any
time interval $[0, T]$.
\end{theorem}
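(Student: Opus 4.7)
The plan is to implement the second-generation $I$-method of Colliander--Keel--Staffilani--Takaoka--Tao directly for the mKdV flow, without routing through Miura's transform. Fix $s \in (1/4,1)$ and $N \gg 1$. First I would introduce the Fourier multiplier $I = I_N$ whose symbol $m(\xi)$ equals $1$ for $|\xi| \le N$ and decays like $(N/|\xi|)^{1-s}$ for $|\xi| \ge 2N$; then $I: H^s(\R)\to H^1(\R)$ with operator norm $\lesssim N^{1-s}$, and applying $I$ to \eqref{ivpmkdv} gives $\p_t(Iu) + \p_x^3(Iu) + \p_x I(u^3) = 0$, on which one attempts to run an $H^1$-level theory.

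Next I would upgrade the local theory of Theorem \ref{loca} to a local well-posedness statement for the $I$-modified equation in a Bourgain space $X^{1,b}$ adapted to $I$. This is a consequence of the sharp quadrilinear estimate underlying the $s=1/4$ local theory of Kenig--Ponce--Vega combined with an interpolation lemma that controls commutators of $I$ with products. The resulting existence time on which the $I$-norm at most doubles is bounded from below by a negative power of $\|I\phi\|_{H^1}$.

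The heart of the argument is an almost conservation law for a \emph{second-generation} modified energy
$E^2_I(u) = E(Iu) + \Lambda_4(u)$,
where $\Lambda_4$ is a carefully chosen quartic correction. Differentiating along the mKdV flow, $\frac{d}{dt}E(Iu)$ is a sextic space-time integral with a symbol that does not vanish on the resonance set $\xi_1+\xi_2+\xi_3+\xi_4=0$; the role of $\Lambda_4$ is to absorb exactly the resonant part of this sextic form, so that $\frac{d}{dt}E^2_I(u)$ is given by a genuinely higher-order multilinear form whose increment on one local-theory interval admits a bound $\lesssim N^{-\alpha}$ with a strictly better exponent $\alpha$ than the first-generation one. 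The main technical obstacle of the paper lies here: one has to perform the frequency case analysis on $(\xi_1,\dots,\xi_6)$ with $\sum\xi_j=0$, use the pointwise comparison $m(\xi_1+\cdots+\xi_k) \lesssim \max_j m(\xi_j)^{-1}\prod_j m(\xi_j)$ together with the algebraic identity for the mKdV resonance function, and close the estimates with the $L^4_{xt}$ Strichartz and Kato smoothing bounds for the Airy group. The sharp gain in $N$ obtained from this step is what opens the range $s>1/4$.

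Finally I would assemble the pieces by a rescaling argument: given $T>0$ and $\phi \in H^s$, rescale via the mKdV scaling by $\lambda = \lambda(N,T)$ so that $\|I\phi_\lambda\|_{H^1} \le 1/2$; iterate the local theory on intervals of uniform length using the almost conservation of $E^2_I$ to keep the $H^1$ norm of $Iu_\lambda$ bounded throughout roughly $N^{\alpha}$ steps; choose $N = N(T)$ with the largest admissible power depending on $s-1/4$ so that the rescaled solution exists on $[0,T/\lambda^3]$. Undoing the scaling extends the original solution to $[0,T]$, which is the claim. Everything outside the second-generation multilinear estimate is bookkeeping; that estimate is the single hard step I would expect to require the most care.
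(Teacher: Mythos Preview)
Your plan is essentially the paper's own argument: second-generation $I$-method with a corrected modified energy, the $\Lambda_6$ almost-conservation estimate yielding a gain of $N^{-3}$, the variant local theory via the interpolation lemma, and the rescaling/iteration to reach arbitrary $T$ for $s>1/4$.

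One point of your description is garbled and worth correcting before you implement it. You write that $\tfrac{d}{dt}E(Iu)$ is a \emph{sextic} form and that the quartic correction ``absorbs the resonant part of this sextic form''. In fact $\tfrac{d}{dt}E(Iu)$ decomposes as a \emph{quartic} piece $\Lambda_4$ (coming from the derivative of the $H^1$ part and the $\alpha_4$ contribution of the $L^4$ part) plus a sextic piece $\Lambda_6$. The second-generation step is to replace the symbol $m_1m_2m_3m_4$ in the quartic part of $E(Iu)$ by the specific multiplier $M_4(\xi_1,\dots,\xi_4)=\dfrac{m_1^2\xi_1^3+\cdots+m_4^2\xi_4^3}{\xi_1^3+\cdots+\xi_4^3}$, chosen precisely so that the entire $\Lambda_4$ contribution to $\tfrac{d}{dt}E^2$ cancels; what remains is a pure $\Lambda_6(M_6)$ with $M_6=M_4(\xi_1,\xi_2,\xi_3,\xi_{456})\xi_{456}$. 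The hard work is then the pointwise bound $|M_6|\lesssim m^2(N_{h_1})N_{h_1}$, obtained not via the generic product comparison you cite but via the Mean Value Theorem and a Double Mean Value Theorem applied to $f(\xi)=m^2(\xi)\xi^2$ after rewriting the numerator using $\xi_1^3+\cdots+\xi_4^3=-3\xi_{12}\xi_{13}\xi_{23}$. The $\Lambda_6$ space-time estimate is then closed with the $L^6_{t,x}$ Airy Strichartz bound, the $L^4_xL^\infty_t$ maximal-function estimate, and a bilinear refinement (rather than $L^4_{xt}$ and Kato smoothing as you suggest), giving the decisive $N^{-3}$ decay.
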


The second main result, about de mKdV system (\ref{ivp-sy}), is  the following:
\begin{theorem}\label{cmkdvglob}
For any $(\phi, \psi) \in H^s(\R)\times H^s(\R),\,s>\frac14$, the unique local solution
 to the IVP (\ref{ivp-sy}) given by Theorem \ref{loc-sys} extends to any
time interval $[0, T]$.
\end{theorem}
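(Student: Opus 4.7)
The plan is to adapt the second-generation $I$-method of Colliander, Keel, Staffilani, Takaoka and Tao \cite{CKSTT-2} directly to the coupled system, in the same spirit as the argument to be used for Theorem \ref{mkdvglob}. Let $I=I_N$ denote the Fourier multiplier with symbol $m_N(\xi)$ equal to $1$ on $|\xi|\le N$ and decaying like $(N/|\xi|)^{1-s}$ for $|\xi|\gg N$, so that $\|If\|_{H^1}\lesssim N^{1-s}\|f\|_{H^s}$ and $\|f\|_{H^s}\lesssim\|If\|_{H^1}$. The goal is to manufacture an almost-conserved functional $E^2(Iu,Iv)$, comparable to $\|(Iu,Iv)\|_{H^1\times H^1}^2$, whose time derivative along the flow of \eqref{ivp-sy} decays as a negative power of $N$.

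\textbf{Construction of the modified energy.} First I would lift the conservation law $I_2$ to
\begin{equation*}
E^1(Iu,Iv):=\int_{\R}\bigl[(Iu_x)^2+(Iv_x)^2-(Iu)^2(Iv)^2\bigr]\,dx,
\end{equation*}
compute $\frac{d}{dt}E^1$ along smooth solutions of \eqref{ivp-sy}, and observe that the failure of conservation is a $4$-linear form on the hyperplane $\xi_1+\xi_2+\xi_3+\xi_4=0$ whose symbol encodes the commutator between $I$ and the cubic nonlinearities. Splitting this symbol into a resonant piece (where the phase function $\xi_1^3+\xi_2^3+\xi_3^3+\xi_4^3$ is small) and a non-resonant piece, I would set
\begin{equation*}
E^2:=E^1+\Lambda_4,
\end{equation*}
where $\Lambda_4$ is the $4$-linear correction whose symbol is obtained by dividing the non-resonant part of the symbol of $\frac{d}{dt}E^1$ by the phase. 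By design, the bulk of $\frac{d}{dt}E^1$ is then cancelled by $\frac{d}{dt}\Lambda_4$, and $\frac{d}{dt}E^2$ reduces to a small residual $4$-linear contribution plus a genuinely $6$-linear remainder.

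\textbf{Almost conservation, scaling, and iteration.} The core quantitative input is twofold: (i) a pointwise bound on the symbol of $\Lambda_4$ giving $|\Lambda_4|\lesssim N^{-1}\|(Iu,Iv)\|_{H^1\times H^1}^{4}$, so that $E^2\sim E^1$ once the scaling has made $\|(I\phi_\lambda,I\psi_\lambda)\|_{H^1\times H^1}$ sufficiently small; and (ii) an increment estimate of the form
\begin{equation*}
\bigl|E^2(t+\delta)-E^2(t)\bigr|\lesssim N^{-\alpha}\bigl\|(Iu,Iv)\bigr\|_{X^{1,b}\times X^{1,b}}^{6}, \qquad b>\tfrac12,
\end{equation*}
on a unit local existence interval, with $\alpha=\alpha(s)>0$ for every $s>\tfrac14$. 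Exploiting the rescaling $u_\lambda(x,t)=\lambda^{-1}u(x/\lambda,t/\lambda^3)$, $v_\lambda(x,t)=\lambda^{-1}v(x/\lambda,t/\lambda^3)$ to initially achieve $\|(I\phi_\lambda,I\psi_\lambda)\|_{H^1\times H^1}\ll 1$, combining (ii) with Theorem \ref{loc-sys}, and iterating over $\lfloor T\lambda^3\rfloor$ unit steps, one then chooses $\lambda=\lambda(N,T)$ and $N=N(T)$ as appropriate powers of $T$ to close the bootstrap; this yields global existence on any interval $[0,T]$ whenever $s>\tfrac14$.

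\textbf{Main obstacle.} The hard part is estimate (ii): proving the almost-conservation bound down to the threshold $s>\tfrac14$. This forces a delicate case analysis of the $6$-linear multiplier on the resonance set $\sum_{j=1}^{6}\xi_j=0$, controlled through bilinear Strichartz-type estimates in $X^{s,b}$. The coupling in \eqref{ivp-sy} generates mixed $u$--$v$ interactions absent from the scalar mKdV analysis, and their treatment requires careful tracking of the factorization
\begin{equation*}
\xi_1^3+\xi_2^3+\xi_3^3+\xi_4^3=3(\xi_1+\xi_2)(\xi_1+\xi_3)(\xi_2+\xi_3)\quad\text{on }\xi_1+\xi_2+\xi_3+\xi_4=0,
\end{equation*}
which underlies both the construction of $\Lambda_4$ and the pointwise bounds on the $6$-linear remainder. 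The exponent $\tfrac14$ ultimately reflects the scaling budget of this multiplier analysis.
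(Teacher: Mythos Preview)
Your overall strategy is correct and closely parallels the paper's argument, but there is one structural difference worth flagging. You propose to split the symbol of $\frac{d}{dt}E^1$ into resonant and non-resonant pieces and correct only the latter, leaving a residual $4$-linear term in $\frac{d}{dt}E^2$. The paper instead chooses the correction multiplier
\[
\tilde M_4(\xi_1,\xi_2,\xi_3,\xi_4)=4\,\frac{m_1^2\xi_1^3+m_2^2\xi_2^3+m_3^2\xi_3^3+m_4^2\xi_4^3}{\xi_1^3+\xi_2^3+\xi_3^3+\xi_4^3}
\]
so that the $4$-linear contribution to $\frac{d}{dt}E^2$ vanishes \emph{exactly}; the work then shifts to proving that, despite the apparent singularity where the phase vanishes, one has the pointwise bound $|\tilde M_4|\lesssim m^2(N_{h_1})$ (Lemma~\ref{Mn-Est-Lemma}), obtained via the factorization you cite together with a mean-value/double-mean-value analysis of the numerator $\sigma$. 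This exact cancellation is what delivers the clean decay $N^{-3}$ for the $6$-linear remainder, and $\alpha=3$ (not merely some unspecified $\alpha(s)>0$) is precisely the exponent that, after the scaling $\lambda\sim N^{2(1-s)/(1+2s)}$ and $N^3$ iteration steps, produces the condition $TN^{(3-12s)/(1+2s)}\le c$ and hence the sharp threshold $s>\tfrac14$. A resonant/non-resonant split as you describe would typically leave a $4$-linear residual with weaker decay and would not reach the endpoint.

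Two smaller remarks. First, your concern about new difficulties from mixed $u$--$v$ interactions is largely unfounded here: because the coupled nonlinearities share the cubic mKdV structure, the paper's multipliers $\tilde M_4,\tilde M_6$ are constant multiples of the scalar $M_4,M_6$, so the multiplier bounds and the $6$-linear estimate (Proposition~\ref{ACQ-Prop-Functional-Grow}) transfer verbatim. Second, the iteration should invoke the $I$-variant of the local theory (Theorem~\ref{sy-variant}) rather than Theorem~\ref{loc-sys} itself, since one needs the uniform-in-$N$ bound $\|(Iu,Iv)\|_{Z^{\delta}_{1,\frac12+}}\lesssim\|(I\phi,I\psi)\|_{Y^1}$ on each step.
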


\begin{remark}
Our first main result, though reproduces the result proved in \cite{CKSTT-2} it has the merit of being independent, because the result in \cite{CKSTT-2} depends on the sharp global result for the KdV equation and the Miura's transform. The second result is totally new and improves the result in \cite{MP1}, where the author proved global well-posedness in $H^s(\R)\times H^s(\R)$, $s> \frac49$.
\end{remark}

To prove the main results stated above, we use the second generation of the modified energy and almost conserved quantities introduced by
Colliander, Keel, Staffilani, Takaoka and Tao in \cite{CKSTT, CKSTT-2}. In this method, the Fourier transform restriction norm space $X_{s,b}$ (see \eqref{xsb} below) plays a vital role.  The best local well-posedness result to the IVP \eqref{ivpmkdv} and \eqref{ivp-sy}, proved in \cite{KPV3} and \cite{Monte}, respectively, use the smoothing effect of Kato type combined with the maximal function estimate and Leibniz rule for  fractional derivatives. As our work on the global result heavily depends on the local result obtained by the Fourier transform restriction norm method, we will reproduce the following local well-posedness theorems  using this method (see also \cite{Tao} for the mKdV equation).

\begin{theorem}\label{loca}
 Let $s\geq \frac14$, then for any $\phi\in H^s(\R)$, there exist $\delta = \delta(\|\phi\|_{H^s})$ (with $\delta(\rho)\to \infty$ as $\rho\to 0$) and a unique solution $u$ to the IVP \eqref{ivpmkdv} in the time interval $[0, \delta]$. Moreover, the solution satisfies the estimate
\begin{equation}\label{loc-1}
\|u\|_{X^{\delta}_{s, b}}\lesssim \|\phi\|_{H^s},
\end{equation}
where the norm $\|u\|_{X^{\delta}_{s, b}}$ is as defined in \eqref{xsb-rest}.

\end{theorem}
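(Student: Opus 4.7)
The plan is to recast the IVP \eqref{ivpmkdv} as a Duhamel fixed-point equation and close a contraction argument in the Bourgain space $X_{s,b}$ with $b$ slightly greater than $\tfrac12$. Fixing a smooth cutoff $\eta\in C_0^\infty(\R)$ with $\eta\equiv 1$ on $[-1,1]$, I would consider the extended operator
\[
\Psi_\phi(u)(t) := \eta(t)\,U(t)\phi \;-\; \eta(t/\delta)\int_0^{t} U(t-t')\,\p_x\bigl(u^3\bigr)(t')\,dt',
\]
where $U(t)=e^{-t\p_x^3}$ is the Airy propagator. A fixed point of $\Psi_\phi$, restricted to $[0,\delta]$, will give the unique solution of \eqref{ivpmkdv} on that interval, and the bound \eqref{loc-1} is automatic from the iteration.

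The first ingredient is the well-known pair of linear estimates
\[
\bigl\|\eta(t)U(t)\phi\bigr\|_{X_{s,b}}\lesssim \|\phi\|_{H^s}, \qquad
\Bigl\|\eta(t/\delta)\!\int_0^t U(t-t')F(t')\,dt'\Bigr\|_{X_{s,b}}\lesssim \delta^{\theta}\|F\|_{X_{s,b-1+\theta}},
\]
valid for $\tfrac12<b<1$ and $0<\theta\ll 1$; the gain $\delta^{\theta}$ in the second inequality supplies the smallness needed to close the contraction.

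The heart of the argument, and the step I expect to be the main obstacle, is the sharp trilinear $X^{s,b}$ estimate
\[
\bigl\|\p_x(u_1u_2u_3)\bigr\|_{X_{s,b-1}}\;\lesssim\;\prod_{j=1}^{3}\|u_j\|_{X_{s,b}},\qquad s\geq\tfrac14,
\]
known to be sharp at the endpoint $s=\tfrac14$. I would prove it on the Fourier side by writing the left-hand side as a weighted multilinear convolution in the variables $(\xi_j,\tau_j)$ with $\xi=\xi_1+\xi_2+\xi_3$, $\tau=\tau_1+\tau_2+\tau_3$, and exploiting the cubic resonance identity
\[
(\tau-\xi^3)-\sum_{j=1}^{3}(\tau_j-\xi_j^3)=-3(\xi_1+\xi_2)(\xi_2+\xi_3)(\xi_1+\xi_3).
\]
A Littlewood--Paley decomposition in both frequency and modulation, combined with Plancherel, Cauchy--Schwarz and suitable Strichartz/smoothing estimates for the Airy group, handles the generic non-resonant region: whenever the right-hand side above is large, at least one of the weights $\langle\tau-\xi^3\rangle$, $\langle\tau_j-\xi_j^3\rangle$ must dominate $|\xi_{\max}|^3$, and one places the largest modulation weight on that factor to absorb the derivative loss coming from $\p_x$. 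The near-resonant region (where one of the pairwise sums $\xi_i+\xi_j$ almost vanishes) is treated by exploiting that cancellation to lower the order of the multiplier. Dyadic summation then closes the estimate for every $s\geq\tfrac14$.

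Assembling these ingredients, $\Psi_\phi$ maps the ball of radius $2c\|\phi\|_{H^s}$ in $X_{s,b}$ into itself and is a strict contraction whenever $\delta=\delta(\|\phi\|_{H^s})>0$ is chosen sufficiently small; moreover, inspection of the estimate shows $\delta(\rho)\to\infty$ as $\rho\to 0$. The unique fixed point yields the local solution on $[0,\delta]$, restriction to that interval gives \eqref{loc-1}, and uniqueness in the class $X^\delta_{s,b}$ together with continuous dependence on the data follow from the same contraction estimate applied to differences of two solutions.
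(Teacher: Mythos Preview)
Your proposal is correct and follows essentially the same route as the paper: contraction in $X_{s,b}$ via the truncated Duhamel formula, using the standard linear and inhomogeneous estimates together with the sharp trilinear bound $\|\p_x(u^3)\|_{X_{s,b'}}\lesssim\|u\|_{X_{s,b}}^3$ for $s\ge\tfrac14$. The only difference is that the paper simply quotes the trilinear estimate from Tao's multilinear $L^2$ paper rather than reproving it, whereas you outline the resonance-identity/Littlewood--Paley argument behind it; that outline is indeed the standard mechanism used in the cited reference.
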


\begin{theorem}\label{loc-sys}
Let $s\geq \frac14$, then for any $(\phi,\psi)\in H^s(\R)\times H^s(\R)$, there exist $\delta = \delta(\|(\phi,\psi)\|_{H^s\times H^s})$ (with $\delta(\rho)\to \infty$ as $\rho\to 0$) and a unique solution $(u,v)$ to the IVP \eqref{ivp-sy} in the time interval $[0, \delta]$. Moreover, the solution satisfies the estimate
\begin{equation}\label{est-1s}
\|(u, v)\|_{X^{\delta}_{s, b}\times X^{\delta}_{s, b}}\lesssim \|(\phi,\psi)\|_{H^s\times H^s}.
\end{equation}
\end{theorem}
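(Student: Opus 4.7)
The plan is to set up a Banach fixed-point argument in $X^{\delta}_{s,b}\times X^{\delta}_{s,b}$ for a suitable $b>\tfrac12$, treating the coupled system \eqref{ivp-sy} as a single vector equation. Denoting by $U(t)=e^{-t\p_x^3}$ the unitary group associated with the linear part and by $\eta$ a smooth time cutoff equal to $1$ on $[0,\delta]$, I would work with the truncated Duhamel formulations
\begin{align*}
u(t) &= \eta(t)U(t)\phi - \eta(t)\int_0^t U(t-t')\p_x(uv^2)(t')\,dt',\\
v(t) &= \eta(t)U(t)\psi - \eta(t)\int_0^t U(t-t')\p_x(u^2 v)(t')\,dt',
\end{align*}
and view the map $\Phi$ sending $(u,v)$ to the right-hand sides as a nonlinear operator on the product space.

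The second step is to invoke the standard linear estimates in $X_{s,b}$ (cf.\ \cite{Tao, CKSTT-2}): for $\tfrac12<b<1$ and a small $\theta>0$,
\begin{equation*}
\|\eta(t) U(t)\phi\|_{X_{s,b}}\lesssim \|\phi\|_{H^s},\qquad
\Big\|\eta(t)\!\int_0^t U(t-t')F(t')\,dt'\Big\|_{X_{s,b}}\lesssim \delta^{\theta}\|F\|_{X_{s,b-1+\theta}}.
\end{equation*}
Combining these with the trilinear bound
\begin{equation*}
\|\p_x(w_1 w_2 w_3)\|_{X_{s,b-1+\theta}}\lesssim \prod_{j=1}^{3}\|w_j\|_{X_{s,b}},\qquad s\geq \tfrac14,
\end{equation*}
applied with $(w_1,w_2,w_3)=(u,v,v)$ and $(u,u,v)$, yields
\begin{equation*}
\|\Phi(u,v)\|_{X^{\delta}_{s,b}\times X^{\delta}_{s,b}}\le C\|(\phi,\psi)\|_{H^s\times H^s}+C\delta^{\theta}\|(u,v)\|^3_{X^{\delta}_{s,b}\times X^{\delta}_{s,b}},
\end{equation*}
together with a matching Lipschitz bound for differences. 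Choosing the radius $R\sim\|(\phi,\psi)\|_{H^s\times H^s}$ and $\delta=\delta(\|(\phi,\psi)\|_{H^s\times H^s})$ small enough makes $\Phi$ a contraction on the ball of radius $R$; the fixed point is the desired local solution, and the estimate \eqref{est-1s} is immediate from the fixed-point identity and the linear bounds.

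The only nontrivial analytic input is the trilinear estimate at the endpoint $s=\tfrac14$, which I expect to be the main obstacle. This is precisely the sharp estimate of Tao \cite{Tao} for the scalar mKdV nonlinearity, whose proof rests on a dyadic decomposition in the modulation variable $\tau-\xi^3$ and on exploiting the resonance identity
\begin{equation*}
\xi_1^3+\xi_2^3+\xi_3^3-(\xi_1+\xi_2+\xi_3)^3=-3(\xi_1+\xi_2)(\xi_2+\xi_3)(\xi_3+\xi_1)
\end{equation*}
to absorb the derivative coming from $\p_x$. Since the estimate is genuinely trilinear and makes no use of any symmetry among the three factors, it applies verbatim to the mixed products $uv^2$ and $u^2v$; thus the only work specific to the system is the vector-valued bookkeeping. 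Finally, the dependence $\delta(\rho)\to\infty$ as $\rho\to 0$ will reflect the subcritical scaling $u(x,t)\mapsto \lambda u(\lambda x,\lambda^3 t)$ at regularity $H^{1/4}$.
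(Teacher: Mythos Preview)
Your proposal is correct and follows essentially the same route as the paper: a contraction argument in $X_{s,b}\times X_{s,b}$ using the standard linear estimates together with Tao's trilinear bound, applied to the mixed products $uv^2$ and $u^2v$. The paper's proof is identical in structure (it even makes the same observation that only one component needs to be estimated by symmetry), and your remark that the trilinear estimate requires no symmetry among the three factors is exactly what the paper uses implicitly when passing from \eqref{tlin} to the bound on $\|(uv^2)_x\|_{X_{s,b'}}$.
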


\subsection{General Notations}
Before leaving this section, we list some more notations that will be used in this work.
For $f:\R\times [0, T] \to \R$ we define the mixed
 $L_x^pL_T^q$-norm by
\begin{equation*}
\|f\|_{L_x^pL_T^q} = \left(\int_{\R}\left(\int_0^T |f(x, t)|^q\,dt
\right)^{p/q}\,dx\right)^{1/p},
\end{equation*}
with usual modifications when $p = \infty$. We replace $T$ by $t$ if $[0, T]$ is the whole real line $\R$.

We use $\widehat{f}(\xi)$ to denote  the Fourier transform of $f(x)$ defined by
$$
\widehat{f}(\xi) = c \int_{\R}e^{-ix\xi}f(x)dx$$
and
$\widetilde{f}(\xi)$ to denote  the Fourier transform of $f(x,t)$ defined by
$$
\widetilde{f}(\xi, \tau) = c \int_{\R^2}e^{-i(x\xi+t\tau)}f(x,t)dxdt$$

We use $H^s$  to denote the $L^2$-based Sobolev space of order $s$ with norm
$$\|f\|_{H^s(\R)} = \|\langle \xi\rangle^s \widehat{f}\|_{L^2_{\xi}},$$
where $\langle \xi\rangle = 1+|\xi|$.

Next, we introduce the  Fourier transform norm spaces, more commonly known as Bourgain's space in our analysis.

For $s, b \in \R$,  we define the Fourier transform restriction norm space $X_{s,b}(\R\times\R)$ with norm
\begin{equation}\label{xsb}
\|f\|_{ X_{s, b}} = \|(1+D_t)^b U(t)f\|_{L^{2}_{t}(H^{s}_{x})} = \|\langle\tau-\xi^3\rangle^b\langle \xi\rangle^s \widetilde{f}\|_{L^2_{\xi,\tau}},
\end{equation}
 where $U(t) = e^{-t\partial^{3}_{x}}$ is the unitary group associated
 with the linear problem.

If $b> \frac12$, the Sobolev lemma imply that, $ X_{s, b} \subset C(\R ; H^s_x(\R)).$ For any interval $I$, we define the localized spaces $X_{s,b}^I:= X_{s,b}(\R\times I)$ with norm
\begin{equation}\label{xsb-rest}
\|f\|_{ X_{s, b}(\R\times I)} = \inf\big\{\|g\|_{X_{s, b}};\; g |_{\R\times I} = f\big\}.
\end{equation}
Sometimes we use the definition $X_{s,b}^{\delta}:=\|f\|_{ X_{s, b}(\R\times [0, \delta])}$.

 We use $c$ to denote various  constants whose exact values are immaterial and may
 vary from one line to the next. We use $A\lesssim B$ to denote an estimate
of the form $A\leq cB$ and $A\sim B$ if $A\leq cB$ and $B\leq cA$. Also, we
use the notation $a+$ to denote $a+\epsilon$ for $0< \epsilon \ll 1$.

%%%%%%%%%%%%%%%%%%%%%%%%%%%%%%%%%%%%%%%%%%%%%%%%%%%%%%%%%%%
\secao{Local well-posedness results}\label{linear1}
%%%%%%%%%%%%%%%%%%%%%%%%%%%%%%%%%%%%%%%%%%%%%%%%%%%%%%%%%%%

In this section we provide a proof of Theorems \ref{loca} and \ref{loc-sys} using $X_{s,b}$ spaces.  This method  was used in \cite{Tao} to reproduce the local well-posedness for the mKdV equation. In order to apply the $I$-method and almost conserved quantity to get global result,  we need  a variant of local well-posedness result based on it, so a sketch of proof is presented here.

We define a cut-off function $\psi_1 \in C^{\infty}(\R;\; \R^+)$ which is even, such that $0\leq \psi_1\leq 1$ and
\begin{equation}\label{cut-1}
\psi_1(t) = \begin{cases} 1, \quad |t|\leq 1,\\
                          0, \quad |t|\geq 2.
            \end{cases}
\end{equation}
We also define $\psi_T(t) = \psi_1(t/T)$, for $0\leq T\leq 1$.

In what follows we list some estimates that are crucial in the proof of local result.
\begin{lemma}\label{lemma1}
For any $s, b \in \R$, we have
\begin{equation}\label{lin.1}
\|\psi_1U(t)\phi\|_{X_{s,b}}\leq C \|\phi\|_{H^s}.
\end{equation}
Further, if  $-\frac12<b'\leq 0\leq b<b'+1$ and $0\leq \delta\leq 1$, then
\begin{equation}\label{nlin.1}
\|\psi_{\delta}\int_0^tU(t-t')f(u(t'))dt'\|_{X_{s,b}}\lesssim \delta^{1-b-b'}\|f(u)\|_{X_{s, b'}}.
\end{equation}
Moreover, for $b>\frac12$ and $s\geq \frac14$ we have
\begin{equation}\label{tlin}
\|(u^3)_x\|_{X_{s,b'}}\lesssim  \|u\|_{X_{s,b}}^3\quad( \text{nonlinear estimate}).
\end{equation}
\end{lemma}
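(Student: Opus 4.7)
My plan is to prove the three bounds sequentially, with increasing difficulty. The first two are soft estimates coming from direct Fourier-side computations, while the third is the core trilinear estimate for mKdV, and is where the threshold $s\geq \frac14$ appears.

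For the homogeneous linear bound \eqref{lin.1}, I would exploit that $U(t)\phi$ has space-time Fourier transform supported on $\tau=\xi^3$, so multiplying by the time cutoff gives
\[
\widetilde{\psi_1 U(t)\phi}(\xi,\tau) \;=\; \hat{\phi}(\xi)\,\hat{\psi}_1(\tau-\xi^3).
\]
Substituting into the $X_{s,b}$ norm and using that $\langle\cdot\rangle^b \hat{\psi}_1\in L^2(\R)$ (since $\psi_1\in\mathcal{S}$) factors the $\xi$ and $\tau$ integrations and yields the bound. For \eqref{nlin.1}, the standard route is to split the Fourier integral representing $\int_0^t U(t-t')f(t')\,dt'$ into the two regions $\{|\tau-\xi^3|\lesssim\delta^{-1}\}$ and $\{|\tau-\xi^3|\gtrsim\delta^{-1}\}$; Taylor expanding the Duhamel kernel on the first region and estimating directly on the second, together with the time-localization provided by $\psi_\delta$, produces the sharp $\delta^{1-b-b'}$ gain under the assumption $-\tfrac12<b'\leq 0\leq b<b'+1$.

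The real work lies in \eqref{tlin}. By Plancherel and duality, it reduces to showing that the expression
\[
I\;=\;\int_{*}\frac{|\xi|\,\langle\xi\rangle^s\,\prod_{j=1}^3 F_j(\xi_j,\tau_j)\,G(\xi,\tau)}{\langle\tau-\xi^3\rangle^{-b'}\prod_{j=1}^3\langle\xi_j\rangle^s\langle\tau_j-\xi_j^3\rangle^b}\,d\mu
\]
is bounded by $\|G\|_{L^2}\prod_{j=1}^3\|F_j\|_{L^2}$, where the measure $d\mu$ enforces $\xi=\xi_1+\xi_2+\xi_3$ and $\tau=\tau_1+\tau_2+\tau_3$. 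The key algebraic tool is the resonance identity
\[
(\tau-\xi^3)-\sum_{j=1}^{3}(\tau_j-\xi_j^3)\;=\;-3(\xi_1+\xi_2)(\xi_2+\xi_3)(\xi_1+\xi_3),
\]
valid on the support of $d\mu$, which forces at least one of the four modulation weights $\langle\tau-\xi^3\rangle,\langle\tau_j-\xi_j^3\rangle$ to dominate $|(\xi_1+\xi_2)(\xi_2+\xi_3)(\xi_1+\xi_3)|$. I would then perform a dyadic Littlewood--Paley decomposition in the $\xi_j$, split into cases according to whether the maximum modulation sits on the output factor or on one of the inputs, and use Cauchy--Schwarz combined with the $L^6_{t,x}$ Strichartz inequality (or equivalently the bilinear $L^2$ estimate exploiting the transversality $|\xi_i^2-\xi_j^2|$) for the Airy group.

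The hard part will be the near-resonant, \emph{parallel} interaction where one pair sum $\xi_i+\xi_j$ becomes very small while the third input frequency is close to the output $\xi$: in this regime the resonance identity supplies essentially no gain, the derivative $|\xi|$ in front has to be absorbed entirely by a careful balancing of the Sobolev weights $\langle\xi\rangle^s/\langle\xi_j\rangle^s$, and the bookkeeping only just closes at $s=\frac14$. Handling this endpoint configuration without losing a power of the modulation weight is the delicate point of the entire argument.
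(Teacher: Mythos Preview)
Your sketch is correct in substance, but you should be aware that the paper does not actually prove this lemma at all: its entire proof consists of citations, referring to Ginibre--Tsutsumi--Velo for \eqref{lin.1} and \eqref{nlin.1} and to Tao's multilinear $L^2$ paper for \eqref{tlin}. So there is nothing to compare your argument against at the level of ideas; rather, what you have written is a faithful outline of the arguments contained in those references.

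Your treatment of \eqref{lin.1} is exactly the standard computation. Your description of \eqref{nlin.1} (splitting according to whether $|\tau-\xi^3|$ is above or below $\delta^{-1}$ and extracting the $\delta^{1-b-b'}$ factor from the time localization) is the approach in the cited work. For \eqref{tlin}, the ingredients you list---duality, the cubic resonance identity $(\tau-\xi^3)-\sum_j(\tau_j-\xi_j^3)=-3(\xi_1+\xi_2)(\xi_2+\xi_3)(\xi_1+\xi_3)$, dyadic decomposition, case splitting on the dominant modulation, and the $L^6_{t,x}$/bilinear Airy estimates---are precisely the tools used in Tao's proof, and you have correctly identified the parallel interaction (one pair sum $\xi_i+\xi_j$ small) as the configuration that pins the threshold at $s=\tfrac14$. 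In short, you are not deviating from the paper; you are filling in what the paper outsources.
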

\begin{proof}
For the proof of \eqref{lin.1} and \eqref{nlin.1} we refer to \cite{GTV} and for \eqref{tlin} to \cite{Tao}.
\end{proof}

Now we are in position to sketch  proofs of Theorems \ref{loca} and \ref{loc-sys}.

\begin{proof}[Proof of Theorem \ref{loca}] To obtain the local solution, one can use the cut-off functions in the Duhamel's formula as,
\begin{equation}\label{Duhamel}
u(t) = \psi_1(t)U(t)\phi -\psi_{\delta}(t)\int_0^tU(t-t')(u^3)_x(t')dt',
\end{equation}
where $0\leq\delta\leq 1$.

Consider $M>0$ and define a ball $\mathcal{B}$ in the space $X_{s,b}$ by
$$\mathcal{B} := \bigl\{v\in X_{s,b};\; \|v\|_{X_{s,b}}\leq M\bigl\}.$$

For $\theta := 1-b-b' >0$, if we choose $M = 2c\|\phi\|_{H^s}$ and $\delta \lesssim \|\phi\|_{H^s}^{-\frac2\theta}$, then by using \eqref{lin.1}, \eqref{nlin.1} and \eqref{tlin} it can be shown that the mapping
\begin{equation}\label{Duhamel-2}
\Psi u(t) = \psi_1(t)U(t)\phi -\psi_{\delta}(t)\int_0^tU(t-t')(u^3)_x(t')dt',
\end{equation}
is a contraction on $\mathcal{B}$. So, by the standard fixed point argument we can conclude the local well-posedness of the IVP \eqref{ivpmkdv} for given data in $H^s$, $s\geq \frac14$. Moreover,
\begin{equation*}
\|u\|_{X_{s,b}^{\delta}} \leq \|\phi\|_{H^s},
\end{equation*}
where $[0, \delta] $ is the local existence time interval.
\end{proof}

%%%%%%%%%%%%%%%%%%%%%%%%%%%%%%%%%%%%%%%%%%%%%%%%%%%%%%%%%%%%%%%%%%%%%%%%%%%%%%%%%%%%%%%%%%%%%%%%%%%%%%%%%%%%%%%%%%%%%%%%%

\begin{proof}[Proof of Theorem \ref{loc-sys}]
We define spaces $Z_{s,b}:=X_{s, b}\times X_{s, b}$ and $Y^s:= H^s\times H^s$ with norms  $\|(u, v)\|_{X_{s, b}\times X_{s, b}} := \max \{\|u\|_{X_{s,b}}, \|v\|_{X_{s,b}}\}$ and similar for $Y^s$.  Let $a>0$ and consider a ball in $Z_{s, b}$ given by
\begin{equation}\label{sy-ball}
 \mathcal{X}_{a}^s = \bigl\{ (u, v) \in Z_{s, b};\:\|(u, v)\|_{Z_{s, b}} < a\bigl\}.
 \end{equation}

As we are interested in finding local solution to the IVP \eqref{ivp-sy}, we define the following application with the use cut-off functions
\begin{equation}\label{contrac-s2}
\begin{cases}
\Phi_{\phi}[u, v](t):= \psi_1(t)U(t)\phi - \psi_{\delta}(t)\displaystyle\int_{0}^{t}U(t-t')\partial_x(uv^2)(t')\,dt',\\
\Psi_{\psi}[u, v](t):= \psi_1(t)U(t)\psi - \psi_{\delta}(t)\displaystyle\int_{0}^{t}U(t-t')\partial_x(u^2v)(t')\,dt'.
\end{cases}
\end{equation}

 We will show that, there exist $a>0$ and $\delta >0$ such that the application $\Phi\times \Psi$ maps
 $\mathcal{X}_{a}$ into $\mathcal{X}_{a}$ and is a contraction.

Exploiting the symmetry of the system, we will estimate only the first
component $\Phi$. The estimates for the second component $\Psi$ are similar.

Using \eqref{lin.1}, \eqref{nlin.1} and \eqref{tlin}, we have for $s\geq \frac14$ and $\theta := 1-b-b' >0$,
\begin{equation}\label{sy-1}
\begin{split}
\|\Phi\|_{X_{s,b}}&\leq C\|\phi\|_{H^s} +C\delta^{\theta}\|(uv^2)_x\|_{X_s,b}\\
 &\leq C\|\phi\|_{H^s} +C\delta^{\theta}\|u\|_{X_{s,b}}\|v\|_{X_{s,b}}^2\\
& \leq C\|(\phi, \psi)\|_{Y^s} +C\delta^{\theta}\|(u, v)\|_{Z_{s,b}}^3.
\end{split}
\end{equation}

Similarly
\begin{equation}\label{sy-2}
\|\Psi\|_{X_{s,b}}\leq C\|(\phi, \psi)\|_{Y^s} +C\delta^{\theta}\|(u, v)\|_{Z_{s,b}}^3.
\end{equation}

Therefore, from \eqref{sy-1} and \eqref{sy-2}, we obtain
\begin{equation}\label{sy-3}
\|(\Phi, \Psi)\|_{Z_{s,b}}\leq C\|(\phi, \psi)\|_{Y^s} +C\delta^{\theta}\|(u, v)\|_{Z_{s,b}}^3.
\end{equation}

Let us choose $a = 2C\|(\phi, \psi)\|_{Y^s}$,  then from \eqref{sy-3}, we get
\begin{equation}\label{sy-4}
\|(\Phi, \Psi)\|_{Z_{s,b}}\leq \frac a2 + C\delta^{\theta} a^3.
\end{equation}

Now, if we take $C\delta^{\theta} a^2\leq \frac12$, i.e., $\delta \lesssim \|(\phi, \psi)\|_{Y^s}^{-\frac2\theta}$, then \eqref{sy-4} yields
\begin{equation}\label{sy-5}
\|(\Phi, \Psi)\|_{Z_{s,b}}\leq \frac a2 + \frac a2 = a.
\end{equation}

Therefore, the application $\Phi\times\Psi$ maps $\mathcal{X}_{a}^s$ into $\mathcal{X}_{a}^s$. With the similar technique, one can easily show that $\Phi\times\Psi$ is a contraction. Hence by a standard argument one can prove that the IVP \eqref{ivp-sy} is locally well-posed for initial data $(\phi, \psi)\in Y^s$ for any $s\geq \frac14$. Moreover, from \eqref{sy-5} and the choice of $a$, one has
\begin{equation}\label{sy-6}
\|(\Phi, \Psi)\|_{Z_{s,b}}\leq C\|(\phi, \psi)\|_{Y^s}
\end{equation}
 and the proof is finished.
 \end{proof}

%%%%%%%%%%%%%%%%%%%%%%%%%%%%%%%%%%%%%%%%%%%%%%%%%%%%%%
\secao{Modified energy functional}\label{sec3}
%%%%%%%%%%%%%%%%%%%%%%%%%%%%%%%%%%%%%%%%%%%%%%%%%%%%%%%

Before introducing modified energy functional, we define $n$-multiplier and $n$-linear functional.

Let $n\geq 2$ be an even integer. An $n$-multiplier $M_n(\xi_1, \dots, \xi_n)$ is a function defined on the hyper-plane $\Gamma_n:= \{(\xi_1, \dots, \xi_n);\;\xi_1+\dots +\xi_n =0\}$ with Dirac delta $\delta(\xi_1+\cdots +\xi_n)$ as a measure.

If $M_n$ is an $n$-multiplier and $f_1, \dots, f_n$ are functions on $\R$, we define an $n$-linear functional, as
\begin{equation}\label{n-linear}
\Lambda_n(M_n;\; f_1, \dots, f_n):= \int_{\Gamma_n}M_n(\xi_1, \dots, \xi_n)\prod_{j=1}^{n}\hat{f_j}(\xi_j).
\end{equation}
We write $\Lambda_n(M_n):=\Lambda_n(M_n;\; f,f,\dots,f)$ in the case when $\Lambda_n$ is applied to the $n$ copies of the same function $f$.

Using Plancherel identity, the energy $E(t)$ defined in \eqref{con.2} can be written in terms of the $n$-linear functional as
\begin{equation}\label{e.2}
E(t)= -\frac12\Lambda_2(\xi_1\xi_2)-\frac1{12}\Lambda_4(1).
\end{equation}

In what follows we record a lemma that relates the time-derivative of the $n$-linear functional defined for the solution $u$ of the mKdV equation.
\begin{lemma}\label{derivative}
Let $u$ be a solution of the IVP \eqref{ivpmkdv} and $M_n$ be a symmetric $n$-multiplier, then
\begin{equation}\label{der.1}
\frac{d}{dt}\Lambda_n(M_n) = \Lambda_n(M_n\alpha_n)-in\Lambda_{n+2}(M_n(\xi_1, \dots, \xi_{n-1}, \xi_n+\xi_{n+1}+\xi_{n+2})(\xi_n+\xi_{n+1}+\xi_{n+2})),
\end{equation}
where $\alpha_n = i(\xi_1^3+\cdots +\xi_n^3)$.
\end{lemma}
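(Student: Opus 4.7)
The plan is to differentiate $\Lambda_n(M_n)$ evaluated on the solution $u$, substitute the mKdV equation in Fourier variables, and then repackage the output as the sum of an $n$-linear and an $(n+2)$-linear functional.

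First I would apply the Leibniz rule, obtaining
$$\frac{d}{dt}\Lambda_n(M_n) = \sum_{j=1}^n \int_{\Gamma_n} M_n(\xi_1,\dots,\xi_n)\Big(\prod_{k\neq j}\widehat u(\xi_k)\Big)\,\p_t\widehat u(\xi_j).$$
Taking the Fourier transform of \eqref{ivpmkdv} yields $\p_t\widehat u(\xi) = i\xi^3\widehat u(\xi) - i\xi\,\widehat{u^3}(\xi)$, so each summand splits into a linear and a cubic--nonlinear contribution.

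For the linear contribution the $j$-th summand becomes $\Lambda_n(iM_n\xi_j^3)$, and after summing over $j$ one recognizes $\Lambda_n(M_n\alpha_n)$ with $\alpha_n = i(\xi_1^3+\dots+\xi_n^3)$. This already matches the first term on the right-hand side, and notably does not use the symmetry hypothesis on $M_n$.

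For the cubic contribution, I would use the symmetry of $M_n$ to identify the $j$-th summand with the one for $j=n$ by a relabeling of integration variables, picking up a factor of $n$. Then I would expand $\widehat{u^3}(\xi_n)$ as an integral over $\{\eta_1+\eta_2+\eta_3 = \xi_n\}$ and rename $(\eta_1,\eta_2,\eta_3)$ as new variables $(\xi_n,\xi_{n+1},\xi_{n+2})$. The constraint $\xi_1+\dots+\xi_{n-1}+(\xi_n+\xi_{n+1}+\xi_{n+2}) = 0$ is exactly membership in $\Gamma_{n+2}$; the multiplier is then evaluated at $(\xi_1,\dots,\xi_{n-1},\xi_n+\xi_{n+1}+\xi_{n+2})$; and the surviving scalar factor $-i\xi_n^{\text{old}}$ becomes $-i(\xi_n+\xi_{n+1}+\xi_{n+2})$. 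Collecting, the cubic piece is precisely the $-in\Lambda_{n+2}(\cdots)$ term in \eqref{der.1}. The only delicate point is the bookkeeping in this step: keeping track of which variable gets replaced by the triple sum, verifying that the prefactor and the multiplier's last argument both equal $\xi_n+\xi_{n+1}+\xi_{n+2}$, and landing on $\Gamma_{n+2}$ with the correct measure. All the manipulations are formal on Schwartz data and extend by density and by continuity of the multilinear expressions in the relevant $X_{s,b}$ norm to the regularity level at which \eqref{der.1} will be applied.
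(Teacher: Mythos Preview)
Your argument is correct and is exactly the standard computation: Leibniz rule, substitute the Fourier-side form of \eqref{ivpmkdv}, collect the linear piece into $\Lambda_n(M_n\alpha_n)$, and use the symmetry of $M_n$ together with the convolution expansion of $\widehat{u^3}$ to rewrite the cubic piece as the $(n+2)$-linear functional. The paper itself states Lemma~\ref{derivative} without proof (it is a routine identity in the $I$-method literature, cf.\ \cite{CKSTT-2}), so there is nothing to compare against; your write-up fills in precisely the expected details.
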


Given $s<1$ and a parameter $N\gg 1$, we define a multiplier operator
\begin{equation}\label{I-1}
\widehat{If}(\xi) = m(\xi)\hat{f}(\xi),
\end{equation}
where
\begin{equation}\label{m-2}
m(\xi)=\begin{cases} 1, \quad \qquad\quad|\xi|\leq N,\\
                        \big(\frac{N}{|\xi|}\big)^{1-s}, \quad\; |\xi|\geq 2N,
          \end{cases}
          \end{equation}
is a smooth, radially symmetric and nonincreasing.

 Note that $I$ is a smoothing operator of order $1-s$, in fact
\begin{equation}\label{sm-1}
\|u\|_{X_{s_0, b_0}}\leq c \|Iu\|_{X_{s_0+1-s, b_0}}\leq cN^{1-s}\|u\|_{X_{s_0, b_0}}.
\end{equation}

Now we introduce the first modified energy
\begin{equation}\label{mod-1}
E^1(u):= E(Iu).
\end{equation}

Using Plancherel identity, we can write the first modified energy in terms of the $n$-linear functional as
\begin{equation}\label{mod-12}
E^1(u)= -\frac12\Lambda_2(m_1\xi_1m_2\xi_2)-\frac1{12}\Lambda_4(m_1m_2m_3m_4),
\end{equation}
where $m_j =m(\xi_j)$.

We define the second generation of the modified energy as
\begin{equation}\label{sec-m1}
E^2(u):= -\frac12\Lambda_2(m_1\xi_1m_2\xi_2)-\frac1{12}\Lambda_4(M_4(\xi_1, \xi_2, \xi_3, \xi_4)),
\end{equation}
where the multiplier $M_4$ is to be chosen later.

Now using the identity \eqref{der.1}, symmetrizing and using the fact that $m$ is even, we get
\begin{equation}\label{sec-m2}
\begin{split}
\frac{d}{dt} E^2(u) &= i\Lambda_4\Big((m_1^2\xi_1^3+\cdots +m_4^2\xi_4^3)-M_4(\xi_1, \dots, \xi_4)(\xi_1^3+\xi_2^3+\xi_3^3+\xi_4^3)\Big)\\ \quad &+\frac{i}3\Lambda_6(M_4(\xi_1, \xi_2, \xi_3, \xi_4+\xi_5+\xi_6)(\xi_4+\xi_5+\xi_6))
\end{split}
\end{equation}

If we choose,
\begin{equation}\label{m.4}
M_4(\xi_1, \xi_2, \xi_3, \xi_4) = \frac{m_1^2\xi_1^3+\cdots +m_4^2\xi_4^3}{\xi_1^3+\xi_2^3+\xi_3^3+\xi_4^3},
\end{equation}
then we get $\Lambda_4 =0$.

So, for this choice of $M_4$, we have
\begin{equation}\label{second-m3}
\frac{d}{dt} E^2(u) =\frac{i}3\Lambda_6\Big(M_4(\xi_1, \xi_2, \xi_3, \xi_4+\xi_5+\xi_6)(\xi_4+\xi_5+\xi_6)\Big)=:\Lambda_6(M_6).
\end{equation}

In what follows, we consider $M_4$ given by \eqref{m.4} and $M_6$ defined by
\begin{equation}\label{m.6}
M_6 = M_4(\xi_1, \xi_2, \xi_3, \xi_{456})\xi_{456} =\frac{m_1^2\xi_1^3+m_2^2\xi_2^3+m_3^2\xi_3^3+m^2(\xi_{456})\xi_{456}^3}{\xi_1^3+\xi_2^3+\xi_3^3+\xi_{456}^3} \xi_{456},
\end{equation}
where we have used the notation $\xi_{ijk} = \xi_i+\xi_j+\xi_k$. We recall that on $\Lambda_n$ ($n=4,6$), one has $\xi_1+\cdots+\xi_n =0$.

%%%%%%%%%%%%%%%%%%%%%%%%%%%%%%%%%%%%%%%%%%%%%%%%%%%%%%%%%%%
\secao{Pointwise Multilinear Bounds}\label{sec4}
%%%%%%%%%%%%%%%%%%%%%%%%%%%%%%%%%%%%%%%%%%%%%%%%%%%%%%%%%%%
This section  is devoted to the analysis of the multipliers $M_4$ and $M_6$  introduced in the previous section.
The estimates obtained will be applied in the proof of the almost conservation property in the next section.

\subsection{Notations and preliminary calculus} Before stating our main nonlinear estimates, we recall some important
notation introduced in \cite{CKSTT-2}. Let $\xi=(\xi_1, \ldots, \xi_n)$ be the vector of frequencies such that
\begin{equation}\label{n-Hyperplane}
\Gamma_n = \bigl\{\xi=(\xi_1, \ldots, \xi_n)\in \mathbb{R}^n;\; \xi_1+\xi_2+ \cdots + \xi_n=0 \bigl\}.
\end{equation}
In our context we are interested in the cases $n=4$ and $n=6$. We define $N_i:=|\xi_i|$ and  $N_{ij}:=|\xi_{ij}|$, where $\xi_{ij}=\xi_i+\xi_j$, and we denote by
$$1\le h_1, h_2, h_3, h_4 \le n,\; (n=4,6)$$
the distinct indices such that
$$N_{h_1}\ge N_{h_2}\ge N_{h_3}\ge N_{h_4}$$
are the highest, second highest, third highest, and fourth highest values of the frequencies
$N_1, N_2,\ldots,N_n$, respectively.

We recall the following arithmetic fact:
\begin{equation}\label{Arithmetic-Fact}
\xi_1+\xi_2+ \xi_3+\xi_4=0\Longrightarrow \xi_1^3+\xi_2^3+ \xi_3^3+\xi_4^3=-3(\xi_1+\xi_2)(\xi_1+\xi_3)(\xi_2+\xi_3).
\end{equation}

Furthermore, the following \emph{Double Mean Value Theorem} (DMVT) will be useful to obtain the main estimates.

\begin{lemma}[DMVT]\label{DMVT}
Assume that $f\in C^2(\mathbb{R})$ and that $\max \{|\lambda|, |\eta|\} \ll |\xi|$. Then,
$$|f(\xi+ \lambda+ \eta)-f(\xi +\lambda )-f(\xi+\eta) + f(\xi)|\lesssim |f''(\xi_{\theta})||\lambda||\eta|,$$
where $|\xi_{\theta}|\sim |\xi|$.
\end{lemma}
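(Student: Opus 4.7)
The plan is to reduce the claim to a double application of the ordinary (one-variable) mean value theorem. I introduce the auxiliary function $g:[0,1]\to \mathbb{R}$ defined by
$g(t) := f(\xi + t\lambda + \eta) - f(\xi + t\lambda),$
and observe the telescoping identity
$f(\xi+\lambda+\eta) - f(\xi+\lambda) - f(\xi+\eta) + f(\xi) = g(1) - g(0).$
Since $f\in C^2$, the function $g$ is $C^1$, so the classical MVT produces some $t_1\in(0,1)$ with
$g(1)-g(0) = g'(t_1) = \lambda\bigl[f'(\xi + t_1\lambda + \eta) - f'(\xi + t_1\lambda)\bigr].$

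Next, I apply the MVT a second time, this time to $f'$ on the segment joining $\xi + t_1\lambda$ and $\xi + t_1\lambda + \eta$; this produces $s_1\in(0,1)$ such that
$f'(\xi + t_1\lambda + \eta) - f'(\xi + t_1\lambda) = \eta\, f''(\xi + t_1\lambda + s_1\eta).$
Combining the two steps yields the exact pointwise identity
$f(\xi+\lambda+\eta) - f(\xi+\lambda) - f(\xi+\eta) + f(\xi) = \lambda\eta\, f''(\xi_\theta),$
where I set $\xi_\theta := \xi + t_1\lambda + s_1\eta$. The desired inequality then follows immediately by taking absolute values.

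It remains to verify that $|\xi_\theta|\sim |\xi|$. Because $0\le t_1, s_1\le 1$, the triangle inequality gives $|t_1\lambda + s_1\eta| \le |\lambda|+|\eta|$, and the smallness hypothesis $\max\{|\lambda|,|\eta|\}\ll |\xi|$ forces $|t_1\lambda + s_1\eta|\ll |\xi|$. Hence $|\xi_\theta| = |\xi + O(|\lambda|+|\eta|)|$ is comparable to $|\xi|$, which finishes the argument.

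I do not anticipate any real obstacle: the lemma is a standard second-order Taylor/finite-difference statement, and the proof above is essentially forced by the structure of the left-hand side. The only point requiring a bit of care is the correct order of the two MVT applications: the first must be used on the $t$-variable so as to extract the factor $\lambda$, and only then does one use the second MVT on the increment of $f'$ in the $\eta$-direction to extract $\eta$ and the second derivative at a single common point $\xi_\theta$.
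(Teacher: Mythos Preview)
Your proof is correct. The paper itself states this lemma as a standard preliminary fact without proof, so there is no argument in the paper to compare against; your double application of the one-variable mean value theorem is exactly the natural way to justify it.
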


\subsection{Multiplier bounds} Now we give the  main estimates for the multipliers $M_4$ and $M_6$ defined in
(\ref{m.4}) and (\ref{m.6}), respectively.

\begin{lemma}[Multiplier Bounds]\label{Mn-Est-Lemma}
The  multipliers $M_4$ and $M_6$  satisfy the following estimates:
\begin{align}
&|M_4(\xi_1,\xi_2,\ldots,\xi_4)| \lesssim m^2(N_{h_1})\label{M4-Est-Lemma-1}\\
\intertext{and}
&|M_6(\xi_1,\xi_2,\ldots,\xi_6)| \lesssim m^2(N_{h_1})N_{h_1},\label{M6-Est-Lemma-1}
\end{align}
where $m(\xi)$ is  the function defined in (\ref{m-2}).
\end{lemma}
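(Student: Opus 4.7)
The plan is a case analysis on the relative sizes of the frequencies, exploiting three ingredients: the arithmetic identity (\ref{Arithmetic-Fact}) to factor the denominator of $M_4$, the slow variation of the multiplier $m$ defined in (\ref{m-2}), and the mean value theorem (both in single-variable form and in the double form of Lemma~\ref{DMVT}) to expose cancellations in the numerator that match the potential smallness of the denominator. Once (\ref{M4-Est-Lemma-1}) is in hand, the estimate (\ref{M6-Est-Lemma-1}) will follow by recognising $(\xi_1,\xi_2,\xi_3,\xi_{456})$ as a point of $\Gamma_4$ and extracting the factor $|\xi_{456}|$.

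For the $M_4$ bound, the symmetry in the four arguments lets me assume $N_1\ge N_2\ge N_3\ge N_4$, so that $h_j=j$. The identity $\xi_1=-(\xi_2+\xi_3+\xi_4)$ combined with $N_1\ge N_2$ then forces $N_{h_1}\sim N_{h_2}$. I would distinguish three regimes. First, if $N_{h_1}\lesssim N$, all $m_j$ equal $1$, numerator and denominator coincide, and $|M_4|=1\sim m^2(N_{h_1})$. Second, if $N_{h_1}\gg N$ but $N_{h_3}\ll N_{h_1}$, set $\lambda:=\xi_3+\xi_4=-(\xi_1+\xi_2)$, so that $|\lambda|\le 2N_{h_3}\ll N_{h_1}$. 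A direct computation from (\ref{m-2}) gives $|f'(\xi)|\lesssim m^2(\xi)|\xi|^2$ for the odd function $f(\xi):=m^2(\xi)\xi^3$, and the mean value theorem yields
\[
|m_1^2\xi_1^3+m_2^2\xi_2^3|=|f(\xi_1)-f(-\xi_2)|\lesssim m^2(N_{h_1})\,N_{h_1}^2\,N_{h_3},
\]
while the remaining contribution $|m_3^2\xi_3^3+m_4^2\xi_4^3|\lesssim N_{h_3}^3$ is absorbed. By (\ref{Arithmetic-Fact}) the denominator $-3(\xi_1+\xi_2)(\xi_1+\xi_3)(\xi_2+\xi_3)$ has size $\sim N_{h_3}\,N_{h_1}^2$, and the quotient is $\lesssim m^2(N_{h_1})$. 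Third, if $N_{h_1}\gg N$ and $N_{h_3}\sim N_{h_1}$ (all four frequencies comparable), I would bound the numerator termwise by $4\,m^2(N_{h_1})\,N_{h_1}^3$; when the denominator is of comparable size the estimate is immediate, and in the remaining degenerate sub-configurations some pair sum $\xi_i+\xi_j$ is $\ll N_{h_1}$, in which case Lemma~\ref{DMVT} applied to $u\mapsto m^2(u)u^2$ along that direction supplies the matching cancellation.

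For (\ref{M6-Est-Lemma-1}), the constraint $\xi_1+\cdots+\xi_6=0$ places $(\xi_1,\xi_2,\xi_3,\xi_{456})$ on $\Gamma_4$, so (\ref{M4-Est-Lemma-1}) gives $|M_4(\xi_1,\xi_2,\xi_3,\xi_{456})|\lesssim m^2(N^*)$ with $N^*:=\max\{N_1,N_2,N_3,|\xi_{456}|\}$. Multiplying by $|\xi_{456}|\le 3N_{h_1}$ yields $|M_6|\lesssim m^2(N^*)\,N_{h_1}$, and a short case analysis, depending on whether $N_{h_1}\in\{N_1,N_2,N_3\}$ or $N_{h_1}\in\{N_4,N_5,N_6\}$ and using in the latter case the identity $\xi_{456}=-(\xi_1+\xi_2+\xi_3)$ to tie $N^*$ back to $N_{h_1}$, produces $m^2(N^*)\lesssim m^2(N_{h_1})$ and hence (\ref{M6-Est-Lemma-1}).

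I expect the main obstacle to be the degenerate part of the third sub-case above: when both the numerator and the denominator of $M_4$ shrink simultaneously (because two or more of the pair sums $\xi_i+\xi_j$ are small), one must carry out nested mean value expansions and track the cancellations carefully, which is precisely the situation for which Lemma~\ref{DMVT} has been prepared.
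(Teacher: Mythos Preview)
Your overall strategy---factor the denominator via (\ref{Arithmetic-Fact}), then use MVT and DMVT to force matching cancellation in the numerator---is the right one and is what the paper does. However, there is a concrete error in your second regime and your third regime is only a gesture, not an argument.

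\textbf{The gap in case (b).} You assert that the denominator $-3(\xi_1+\xi_2)(\xi_1+\xi_3)(\xi_2+\xi_3)$ has size $\sim N_{h_3}N_{h_1}^2$. This is false: you only know $|\xi_1+\xi_2|=|\xi_3+\xi_4|\le 2N_{h_3}$, not $\gtrsim N_{h_3}$. Take for instance $\xi_3=-\xi_4$; then $\xi_1+\xi_2=0$ and the denominator vanishes. Correspondingly, your treatment of the remaining term $m_3^2\xi_3^3+m_4^2\xi_4^3$ as $O(N_{h_3}^3)$ throws away exactly the cancellation you need. The fix is to keep $|\xi_1+\xi_2|$ as a live parameter: the MVT on your odd $f(\xi)=m^2(\xi)\xi^3$ gives $|f(\xi_1)-f(-\xi_2)|\lesssim m^2(N_{h_1})N_{h_1}^2|\xi_1+\xi_2|$ and, applied again to the second pair, $|f(\xi_3)-f(-\xi_4)|\lesssim m^2(N_{h_3})N_{h_3}^2|\xi_3+\xi_4|$. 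Since $m^2(\xi)\xi^2$ is nondecreasing, both pieces are $\lesssim m^2(N_{h_1})N_{h_1}^2|\xi_1+\xi_2|$, and now the factor $|\xi_1+\xi_2|$ cancels against the denominator.

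\textbf{Case (c) is not a proof.} When all four frequencies are comparable, one, two, or even all three of the pair sums $\xi_1+\xi_2,\ \xi_1+\xi_3,\ \xi_2+\xi_3$ can be $\ll N_{h_1}$ simultaneously (two is the genuinely delicate situation). Saying ``DMVT along that direction supplies the matching cancellation'' is not enough: you must exhibit an algebraic rearrangement of the numerator that simultaneously extracts \emph{both} small factors before the second-order estimate applies. The paper does this not by your ``how many frequencies are large'' split but by comparing the largest frequency $|\xi_1|$ to the largest pair sum $|\xi_{23}|$. In each of the two resulting cases the paper rewrites $\sigma=m_1^2\xi_1^3+\cdots+m_4^2\xi_4^3$ as a specific combination of differences of the \emph{even} function $f(\xi)=m^2(\xi)\xi^2$ (note: $\xi^2$, not your $\xi^3$), arranged so that each summand already carries two of the three factors $\xi_{12},\xi_{13},\xi_{23}$, with the third recovered either by MVT or by DMVT (Lemma~\ref{DMVT}). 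The point is that the decomposition is chosen \emph{after} knowing which pair sums can be small, so that the small factors appear explicitly; your case (c) would have to reproduce exactly this bookkeeping.

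\textbf{On the $M_6$ step.} Your reduction to the $M_4$ bound with $N^*=\max\{N_1,N_2,N_3,|\xi_{456}|\}$ is natural, but the claimed inequality $m^2(N^*)\lesssim m^2(N_{h_1})$ fails whenever $N_{h_1}\in\{N_4,N_5,N_6\}$ and there is heavy cancellation in $\xi_{456}$ (so $N^*\ll N_{h_1}$); the identity $\xi_{456}=-(\xi_1+\xi_2+\xi_3)$ does not rescue you here. The paper instead passes through the quantity $m^2(|\xi_1|)|\xi_1|$ with $|\xi_1|=\max\{N_1,N_2,N_3\}$ and bounds this by $m^2(N_{h_1})N_{h_1}$; you should be aware that this last step relies on the monotonicity of $\xi\mapsto m^2(\xi)\xi$, which is the same issue in a different guise.
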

\begin{proof}
First, using  (\ref{n-Hyperplane}) and (\ref{Arithmetic-Fact}), we rewrite the expressions for $M_4$ and $M_6$ as follows:
\begin{align}
&M_4(\xi_1, \ldots, \xi_4)=-\frac{\sigma(\xi_1, \xi_2, \xi_3)}{3\xi_{12}\xi_{13}\xi_{23}}\label{m.4-Alternative}\\
\intertext{and}
&M_6(\xi_1,\ldots,\xi_6)=\frac{\sigma(\xi_1,\xi_2, \xi_3)}{3\xi_{12}\xi_{13}\xi_{23}}\xi_{123},\label{m.6-Alternative}
\end{align}
where
\begin{equation}\label{Sigma-Numerator}
\sigma(\xi_1,\xi_2,\xi_3)=m_1^2\xi_1^3+m_2^2\xi_2^3+m_3^2\xi_3^3 - m^2(\xi_{123})\xi_{123}^3.
\end{equation}
By symmetry we can suppose  that $|\xi_{12}| \le |\xi_{13}| \le |\xi_{23}|$ and also without loss of
generality we may assume  that $|\xi_3|\le |\xi_2|\le |\xi_1|$ .

First, we estimate the function $\sigma(\xi_1, \xi_2, \xi_3)$ and for this purpose we separate the analysis into two cases.

\noindent{\textbf{\emph{Case A:}} $\boldsymbol{|\xi_1|\lesssim |\xi_{23}|}$.}
We define the even function $f(\xi)=m^2(\xi)\xi^2$. Note  that
$$f'(\xi) \sim m^2(\xi)\xi=m^2(|\xi|)\xi$$
and that the function $m^2(\xi)\xi$ is nondecreasing.

Now we write the function $\sigma=\sigma(\xi_1,\xi_2,\xi_3)$ as follows:
\begin{equation}\begin{split}\label{m.6-Alt-Case-A-1}
\sigma&=m_1^2\xi_1^3 + m_2^2\xi_2^3+m_3^2\xi_3^3 - m^2(\xi_{123})\xi_{123}^3\\
&=m_1^2\xi_1^3 -\xi_1m_2^2\xi_2^2 + (\xi_1+\xi_2)m_2^2\xi_2^2+m_3^2\xi_3^3 - m^2(\xi_{123})\xi_{123}^2(\xi_1+\xi_2+\xi_3)\\
&=\xi_1\left[m_1^2\xi_1^2 - m_2^2\xi_2^2\right] + \xi_{12}\left[m_2^2\xi_2^2 - m^2(\xi_{123})\xi_{123}^2\right] +
\xi_3 \left[m_3^2\xi_3^2-  m^2(\xi_{123})\xi_{123}^2\right]\\
&=\xi_{13}\left[f(\xi_1) \!-\! f(-\xi_2)\right] + \xi_{12}\left[f(\xi_2)\!-\!f(\xi_{123})\right]+\xi_3\left[f(\xi_3)\!-\! f(\xi_{123})\!-\!f(\xi_1)+f(\xi_2)\right]\\
&=\sigma_{1} +\sigma_{2} + \sigma_{3},
\end{split}\end{equation}
where
\begin{equation}\label{m.6-Alt-Case-A-2}
\begin{cases}
\sigma_{1}=\xi_{13}\left[f(\xi_1) - f(-\xi_2)\right],\\
\sigma_{2}=\xi_{12}\left[f(\xi_2) - f(\xi_{123})\right],\\
\sigma_{3}=\xi_3 \left[f(\xi_3)- f(\xi_{123})-f(\xi_1)+f(\xi_2)\right].
\end{cases}
\end{equation}
Next, we estimate the right hand of the inequality
\begin{equation}\label{m.6-Alt-Case-A-3}
|\sigma(\xi_1,\xi_2,\xi_3)|\le |\sigma_{1}|+ |\sigma_{2}|+ |\sigma_{3}|.
\end{equation}
Using  the \emph{Mean Value Theorem} we have that
\begin{equation*}\begin{split}
|\sigma_{1}|+ |\sigma_{2}|& \lesssim \Bigl(m^2(|\xi_{\theta_1}|)|\xi_{\theta_1}| + |m^2(|\xi_{\theta_2}|)|\xi_{\theta_2}|\Bigl)|\xi_{13}||\xi_{12}|,
\end{split}\end{equation*}
where $|\xi_{\theta_i}| \lesssim |\xi_1|\; (i=1,2)$. So,
\begin{equation}\label{m.6-Alt-Case-A-4}
|\sigma_{1}|+ |\sigma_{2}|\lesssim m^2(|\xi_1|)|\xi_1|\,|\xi_{13}||\xi_{12}|\lesssim m^2(|\xi_1|)|\xi_{13}||\xi_{12}||\xi_{23}|
\end{equation}

To estimate the remaining term we divide into sub-cases:
\begin{enumerate}
\item [$(A_1)$] $\boldsymbol{|\xi_2| \lesssim  |\xi_{13}|}$. In this situation, using that $|\xi_3|\le  |\xi_2|\lesssim |\xi_{13}|$ and the  \emph{Mean Value Theorem}, we have
\begin{equation*}\begin{split}
|\sigma_{3}|&\le|\xi_3|\bigl(|f(\xi_3)- f(\xi_{123})|+ |f(\xi_2)-f(-\xi_1)|\bigl)\\
&\lesssim |\xi_{13}|\bigl(|m^2(|\xi_{\theta_3}|)|\xi_{\theta_3}|+ |m^2(|\xi_{\theta_1}|)|\xi_{\theta_1}|\bigl)|\xi_{12}|,\\
\end{split}\end{equation*}
where $|\xi_{\theta_i}| \lesssim |\xi_1|\; (i=1,2).$ So,
\begin{equation}\label{m.6-Alt-Case-A-5}
|\sigma_{3}|\lesssim m^2(|\xi_1|)|\xi_1|\,|\xi_{13}||\xi_{12}|\lesssim m^2(|\xi_1|)|\xi_{23}|\,|\xi_{13}||\xi_{12}|.
\end{equation}

\item [$(A_2)$] $\boldsymbol{|\xi_{13}| \ll |\xi_2|}$. Here, since $|\xi_{12}|\le |\xi_{13}|\ll |\xi_2|$ we apply the DMVT to obtain
\begin{equation*}\begin{split}
|\sigma_{3}|&=|\xi_3|\bigl(|f(\xi_2-\xi_{12}+\xi_{13})- f(-\xi_1)- f(\xi_{123}) + f(\xi_2)|\bigl)\\
&\lesssim |\xi_1||f''(\xi_{\theta})||\xi_{12}|\xi_{13}|,\\
\end{split}\end{equation*}
where $|\xi_{\theta}|\sim |\xi_2|$ and  we have used that $|\xi_3|\le |\xi_1|$.
Then, using that $|\xi_{12}| \ll |\xi_2|$ implies $|\xi_2|\sim |\xi_1|$  we have
$|f''(\xi_{\theta})|\lesssim m^2(|\xi_1|)$ and consequently it follows that
\begin{equation}\begin{split}\label{m.6-Alt-Case-A-6}
|\sigma_{3}|\lesssim |\xi_1|m^2(|\xi_1|)|\xi_{12}||\xi_{13}|\lesssim m^2(|\xi_1|)|\xi_{12}||\xi_{13}||\xi_{23}|.
\end{split}\end{equation}
\end{enumerate}
Now we combine (\ref{m.6-Alt-Case-A-4}), (\ref{m.6-Alt-Case-A-5}) and (\ref{m.6-Alt-Case-A-6}) to get
\begin{equation}\label{m.6-Alt-Case-A-7}
|\sigma(\xi_1,\xi_2,\xi_3)|\le |\sigma_{1}|+ |\sigma_{2}|+ |\sigma_{3}|
\lesssim m^2(|\xi_1|)\,|\xi_{12}|\xi_{13}||\xi_{23}|.
\end{equation}

\noindent{\textbf{\emph{Case B:}} $\boldsymbol{|\xi_1| \gg  |\xi_{23}|}$.} In this case we write $\sigma$ in the following
form:
\begin{equation}\label{m.6-Alt-Case-B-1}
\sigma=f(\xi_1)\xi_1+f(\xi_2)\xi_2+f(\xi_3)\xi_3-f(\xi_{123})\xi_{123}=\tilde{\sigma}_{1}+\tilde{\sigma}_{2}+\tilde{\sigma}_{3},
\end{equation}
where
\begin{equation}\label{m.6-Alt-Case-B-2}
\tilde{\sigma}_{i}=f(\xi_i)\xi_i-\xi_i f(\xi_{123}), \quad (i=1,2,3).
\end{equation}
On the other hand, we can rewrite the $\tilde{\sigma}_{i}$'s ($i=1,2,3$) terms as follows:
\begin{align}
&\tilde{\sigma}_{2}= \xi_2\Bigl[\underbrace{f(\xi_1-\xi_{13}+\xi_{23})-f(-\xi_3)-f(\xi_{123}) + f(\xi_1)}_{a(\xi_1,\xi_2,\xi_3)}\Bigl] +\, \xi_2f(-\xi_3)-\xi_2f(\xi_1),\label{m.6-Alt-Case-B-3}\\
&\tilde{\sigma}_{3}= \xi_3\bigl[\underbrace{f(\xi_2-\xi_{12}+\xi_{13})-f(-\xi_1)-f(\xi_{123}) + f(\xi_2)}_{b(\xi_1,\xi_2,\xi_3)}\bigl] +\, \xi_3f(-\xi_1)-\xi_3f(\xi_2).\label{m.6-Alt-Case-B-4}\\
&\tilde{\sigma}_{3}= \xi_3\bigl[\underbrace{f(\xi_1-\xi_{12}+\xi_{23})-f(-\xi_2)-f(\xi_{123}) + f(\xi_1)}_{c(\xi_1,\xi_2,\xi_3)}\bigl] +\, \xi_3f(-\xi_2)-\xi_3f(\xi_1).\label{m.6-Alt-Case-B-5}
\end{align}
Now, we arrange (\ref{m.6-Alt-Case-B-3}),  (\ref{m.6-Alt-Case-B-4}) and (\ref{m.6-Alt-Case-B-5}) to get
\begin{align}
&\tilde{\sigma}_{2}=\xi_{12} a(\xi_1,\xi_2,\xi_3)-\xi_1f(\xi_2)+\xi_1f(-\xi_3)-\tilde{\sigma}_{1}+\xi_2f(-\xi_3)-\xi_2f(\xi_1),\label{m.6-Alt-Case-B-6}\\
&\tilde{\sigma}_{3}=\xi_{23} b(\xi_1,\xi_2,\xi_3)-\xi_2f(\xi_3)+\xi_2f(-\xi_1)-\tilde{\sigma}_{2}+\xi_3f(-\xi_1)-\xi_3f(\xi_2).\label{m.6-Alt-Case-B-7}\\
&\tilde{\sigma}_{3}=\xi_{13} c(\xi_1,\xi_2,\xi_3)-\xi_1f(\xi_3)+\xi_1f(-\xi_2)-\tilde{\sigma}_{1}+\xi_3f(-\xi_2)-\xi_3f(\xi_1).\label{m.6-Alt-Case-B-8}
\end{align}
Then, adding the identities (\ref{m.6-Alt-Case-B-6}),  (\ref{m.6-Alt-Case-B-7}) and (\ref{m.6-Alt-Case-B-8}) we have
$$\sigma=\tilde{\sigma}_{1}+\tilde{\sigma}_{2}+\tilde{\sigma}_{3}=\frac{1}{2}
\Bigl[\xi_{12}\,a(\xi_1,\xi_2,\xi_3)+ \xi_{23}\,b(\xi_1,\xi_2,\xi_3) + \xi_{13}\,c(\xi_1,\xi_2,\xi_3)\Bigl],$$
where we used the fact that $f(\xi)$ is an even function. Observing that $|\xi_{12}| \ll |\xi_1|$ implies $|\xi_{1}|\sim |\xi_2|$ and  applying the DMVT to the terms $a$ $b$ and $c$ we obtain
\begin{equation*}\label{m.6-Alt-Case-B-9}
|\sigma|\lesssim \bigl( |f''(\xi_{\theta_a})|+ |f''(\xi_{\theta_b})| + |f''(\xi_{\theta_c})|\bigl)|\xi_{12}||\xi_{13}||\xi_{23}|,
\end{equation*}
where $|\xi_{\theta_{j}}|\sim |\xi_1|$ with $j=a,b,c$. Also, we have
$|f''(\xi_{\theta_j})|\lesssim m^2(|\xi_1|)$,\, ($j=a,b,c$), and hence

\begin{equation}\label{m.6-Alt-Case-B-10}
|\sigma(\xi_1,\xi_2,\xi_3)|\le |\sigma_{1}|+ |\sigma_{2}|+ |\sigma_{3}|
\lesssim m^2(|\xi_1|)\,|\xi_{12}|\xi_{13}||\xi_{23}|.
\end{equation}

Now, inserting the estimates obtained in (\ref{m.6-Alt-Case-A-7}) and (\ref{m.6-Alt-Case-B-10}) in
(\ref{m.4-Alternative}) and (\ref{m.6-Alternative}) we obtain
\begin{align}
&|M_4(\xi_1, \xi_2,\dots,\xi_4)|\lesssim m^2(|\xi_1|)\le m^2(N_{h_1})\\
\intertext{and}
&|M_6(\xi_1, \xi_2,\dots,\xi_6)|\lesssim m^2(|\xi_1|)|\xi_{123}|\lesssim m^2(|\xi_1|)|\xi_1|\le m^2(N_{h_1})N_{h_1}.
\end{align}
Thus, we finished the proof.
\end{proof}

%%%%%%%%%%%%%%%%%%%%%%%%%%%%%%%%%%%%%%%%%%%%%%%%%%%%%%%%%%%%%%%%%%%%%%%%%%%%%%%%%%%%%%%%%%%%

\secao{Almost Conserved Quantity}\label{sec5}
%%%%%%%%%%%%%%%%%%%%%%%%%%%%%%%%%%%%%%%%%%%%%%%%%%%%%%%%%%%%%%%%%%%%%%%%%%%%%%%%%%%%%%%%%%%%%%%%%%

In this section we estimate the growth of the functional
\begin{equation}\label{Functional-Varphi}
\varphi(u_1, u_2, \dots, u_6)=\int_0^{\delta}\Lambda_6(M_6; u_1, \dots, u_6)dt
\end{equation}
that will be used in the proof of the global result in the  next section.
\subsection{Notations and preliminary results} We use the following notation:
$$\mathcal{H}_k=\bigl\{h_1, h_2, \dots, h_k;\; N_{h_1}\ge N_{h_2}\ge \cdots \ge N_{h_k}\bigl\}$$
is the set of the indices of the $k$  dominant frequencies.

The following property will be useful.
\begin{lemma}\label{ACQ-Lemma-Pre-1} Let $n\ge 2$ be an integer and $f_1,\dots, f_n\in \mathcal{S}(\mathbb{R})$ real-valued functions, then we have
\begin{equation*}
\int_{\Gamma_n}\widehat{f}_1(\xi_1)\cdots\widehat{f}_n(\xi_n)dS_{\xi}=\int_{\mathbb{R}}f_1(x)\cdots f_n(x)dx.
\end{equation*}
\end{lemma}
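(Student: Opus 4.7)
The plan is to recognize the claim as a disguised form of the convolution theorem. The surface measure $dS_\xi$ on $\Gamma_n$ should be interpreted as the distribution $\delta(\xi_1+\cdots+\xi_n)\,d\xi_1\cdots d\xi_n$, with the implicit constant in $dS_\xi$ absorbing the normalization of $\widehat{\cdot}$ fixed in the Introduction. Given this, three routine moves will conclude the proof.

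First, I would unfold the delta function using the identity $\delta(\eta)=c\int_{\mathbb{R}} e^{ix\eta}\,dx$, with $c$ the same constant employed in the definition of the Fourier transform used throughout the paper. The left-hand side then reads
\[
c\int_{\mathbb{R}^n}\widehat{f}_1(\xi_1)\cdots\widehat{f}_n(\xi_n)\left(\int_{\mathbb{R}} e^{ix(\xi_1+\cdots+\xi_n)}\,dx\right)d\xi_1\cdots d\xi_n.
\]

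Second, I would apply Fubini, which is legitimate because each $f_j\in\mathcal{S}(\mathbb{R})$ forces rapid decay of $\widehat{f}_j$, to interchange the order of integration and factor the inner exponential as a product over $j$:
\[
c\int_{\mathbb{R}}\prod_{j=1}^{n}\left(\int_{\mathbb{R}}\widehat{f}_j(\xi_j)\,e^{ix\xi_j}\,d\xi_j\right)dx.
\]

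Third, I would invoke Fourier inversion: each inner $\xi_j$-integral equals a constant multiple of $f_j(x)$. Taking the product over $j$ and gathering all the constants (the $n$ inversions together with the one coming from the delta representation) reduces the display to $\int_{\mathbb{R}} f_1(x)\cdots f_n(x)\,dx$, which is exactly the claim. The only delicate point in the whole argument is bookkeeping of the Fourier normalization; this is not a genuine obstacle but merely a convention check, and it is resolved once one agrees that the implicit constant in $dS_\xi$ is chosen to make the identity an equality with the paper's normalization of $\widehat{\cdot}$. The reality hypothesis on the $f_j$ plays no role beyond ensuring that both sides are real numbers.
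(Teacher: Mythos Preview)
Your approach is correct in spirit but differs from the paper's, and one step needs more care. The paper proceeds by induction on $n$: the case $n=2$ is Plancherel (using that $f_2$ is real to write $\widehat{f}_2(-\xi)=\overline{\widehat{f}_2(\xi)}$), and the inductive step collapses the last two factors via $\widehat{f_{n-1}f_n}=\widehat{f}_{n-1}\ast\widehat{f}_n$. Your route is instead a direct appeal to Fourier inversion, treating all $n$ factors at once; it is more conceptual, does not need induction, and in fact does not use the reality hypothesis at all.

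The one genuine wrinkle is your Fubini step. After expanding $\delta(\xi_1+\cdots+\xi_n)=c\int_{\mathbb{R}}e^{ix(\xi_1+\cdots+\xi_n)}\,dx$, the resulting integrand has no decay in $x$, so the $(n{+}1)$-fold integral is not absolutely convergent and classical Fubini does not apply as stated; rapid decay of the $\widehat{f}_j$ in $\xi_j$ is irrelevant to the $x$-integration. The fix is easy: first parametrize $\Gamma_n$ by setting $\xi_n=-\xi_1-\cdots-\xi_{n-1}$ (which is how the delta measure is actually defined), then expand only $\widehat{f}_n(-\xi_1-\cdots-\xi_{n-1})=c\int_{\mathbb{R}}f_n(x)e^{ix(\xi_1+\cdots+\xi_{n-1})}\,dx$. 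Now the integrand is $f_n(x)\prod_{j<n}\widehat{f}_j(\xi_j)e^{ix\xi_j}$, which \emph{is} absolutely integrable over $\mathbb{R}^n$, Fubini is legitimate, and each $\xi_j$-integral returns $f_j(x)$ by inversion. With this adjustment your argument is complete and arguably cleaner than the paper's.
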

\begin{proof} The identity is valid for  $n=2$. Indeed, by the Plancherel equality we get
\begin{equation*}\begin{split}
\int_{\Gamma_2}\widehat{f}_1(\xi_1)\widehat{f}_2(\xi_2)dS_{\xi}&:=\int_{\mathbb{R}}\widehat{f}_1(\xi_1)\widehat{f}_2(-\xi_1)d\xi_1\\
&=\int_{\mathbb{R}}\widehat{f}_1(\xi_1)\widehat{\bar{f}}_2(\xi_1)d\xi_1
=\int_{\mathbb{R}}f_1(x)f_2(x)dx.
\end{split}\end{equation*}
We proceed by the induction principle. Hence, we suppose that the identity holds for $n-1$ and
then prove it for $n$. Now, using induction argument
\begin{equation*}\begin{split}
\int_{\Gamma_n}\widehat{f}_1(\xi_1)\cdots\widehat{f}_n(\xi_n)dS_{\xi}
&:=\int_{\mathbb{R}^{n-1}}\widehat{f}_1(\xi_1)\cdots\widehat{f}_{n-1}(\xi_{n-1})\widehat{f}_n(-\xi_1\cdots -\xi_{n-1})d\xi_1\dots d\xi_{n-1}\\
&=\int_{\mathbb{R}^{n-2}}\widehat{f}_1(\xi_1)\cdots \widehat{f_{n-1}f_n}(-\xi_1\cdots -\xi_{n-2})d\xi_1\dots d\xi_{n-2}\\
&=\int_{\mathbb{R}}f_1(x)\cdots f_{n-1}(x)f_n(x)dx,
\end{split}\end{equation*}
and this completes the proof.
\end{proof}

Also, we shall take advantage of the following  Strichartz estimates for the Airy group.
\begin{lemma}\label{Strichart-Estimate-Airy}Let $s\ge \frac14$,  $q\in [2, +\infty]$ and $p$ satisfying $\frac{3}{p}=\frac{1}{2}-\frac{1}{q}$. Then
\begin{enumerate}
\item [(a)] $\|f\|_{L^p_tL^q_x}\lesssim \|f\|_{X_{0,\frac{1}{2}+}}$ (Strichartz type estimates),
\item [(b)] $\|f\|_{L^6_tL^6_x}\lesssim \|f\|_{X_{0,\frac{1}{2}+}}$,
\item [(c)] $\|f\|_{L^4_xL^{\infty}_{\delta}}\lesssim \|f\|_{X^{\delta}_{s,\frac{1}{2}+}}$.
\end{enumerate}
\begin{proof}
For the proof of (a) in the case $p=r=8$  we indicate  the reference \cite{KPV5} and for a complete discussion see Lemma 2 in \cite{Grunrock2005A}.
On the other hand, the inequality in (b) can be obtained by interpolation between
$\|f\|_{L^8_{t,x}}\lesssim \|f\|_{X_{0,\frac{1}{2}+}}$
and the trivial estimate
$\|f\|_{L^2_{t,x}}\lesssim \|f\|_{X_{0,0}}$. Finally, we observe that (c) follows from the linear estimate
$\|U(t)\phi\|_{L^4_xL^{\infty}_{\delta}}\le C \|\phi\|_{H^{1/4}}$ and for the complete arguments we refer, for example, the works
\cite{CKSTT-4, Ginibre}.
\end{proof}
\end{lemma}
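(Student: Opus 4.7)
My plan is to prove each of the three estimates by reducing to a known linear estimate for the Airy group $U(t)=e^{-t\partial_x^3}$ and then transferring it to the Bourgain space via the standard principle, with one small interpolation step for part (b).

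For part (a), the linear Strichartz estimate for the unitary group reads $\|U(t)\phi\|_{L^p_tL^q_x}\lesssim \|\phi\|_{L^2_x}$ whenever $3/p=1/2-1/q$ and $q\in[2,\infty]$. The endpoint case $(p,q)=(8,8)$ is the sharp estimate of Kenig--Ponce--Vega, the trivial case $(p,q)=(\infty,2)$ is conservation of the $L^2$ norm, and everything in between follows by Stein interpolation. To pass from the linear estimate to an $X_{0,b}$ estimate with $b>\frac{1}{2}$, I would use the standard transference: write
\begin{equation*}
f(x,t)=\int_{\R} e^{it\lambda}\,U(t)g_\lambda(x)\,d\lambda,\qquad \widehat{g_\lambda}(\xi)=\widetilde{f}(\xi,\xi^3+\lambda),
\end{equation*}
apply Minkowski in $\lambda$ and then the linear Strichartz estimate, and finally use Cauchy--Schwarz with the weight $\langle\lambda\rangle^{b}$, which is square-integrable exactly when $b>\frac{1}{2}$. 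This yields (a).

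For (b), the argument is immediate from (a) and an interpolation: taking $(p,q)=(8,8)$ in (a) gives $\|f\|_{L^8_{t,x}}\lesssim\|f\|_{X_{0,\frac{1}{2}+}}$, while Plancherel gives the trivial identity $\|f\|_{L^2_{t,x}}=\|f\|_{X_{0,0}}$. Complex interpolation on the $b$-parameter at the $L^6_{t,x}$ level produces the claimed bound. For (c), the relevant linear input is the maximal function estimate $\|U(t)\phi\|_{L^4_xL^\infty_t}\lesssim\|\phi\|_{H^{1/4}_x}$ of Kenig--Ponce--Vega. Applying the same transference principle as in (a) to this inequality yields $\|f\|_{L^4_xL^\infty_t}\lesssim \|f\|_{X_{1/4,\frac{1}{2}+}}$, and for $s\ge\frac{1}{4}$ the desired localized bound follows by restricting time to $[0,\delta]$ and using the definition of $X^\delta_{s,\frac{1}{2}+}$ as an infimum over extensions.

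The only genuinely nontrivial ingredient is the sharp endpoint linear Strichartz estimate at $(p,q)=(8,8)$ and the maximal function bound at regularity $\frac{1}{4}$; both are by now classical and available in the references (\cite{KPV5}, \cite{Grunrock2005A}, \cite{CKSTT-4, Ginibre}), so the main obstacle is merely bookkeeping the transference/interpolation cleanly. I do not anticipate any fundamental difficulty beyond invoking these known estimates.
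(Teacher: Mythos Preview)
Your proposal is correct and follows essentially the same approach as the paper: for (a) you invoke the linear Strichartz estimate (endpoint $(8,8)$ from \cite{KPV5}) and transfer it to $X_{0,\frac12+}$, for (b) you interpolate between the $L^8_{t,x}$ bound and the trivial $L^2_{t,x}$ identity exactly as the paper does, and for (c) you reduce to the Kenig--Ponce--Vega maximal function estimate and localize in time. The only difference is that you spell out the transference mechanism (the $\lambda$-decomposition plus Cauchy--Schwarz in the modulation variable) explicitly, whereas the paper simply points to the references \cite{Grunrock2005A, CKSTT-4, Ginibre} for that step.
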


\begin{lemma}\label{Extra-Estimate} Let $s>1/4$ and $v_1, v_2 \in \mathcal{S}(\mathbb{R})\times \mathcal{S}(\mathbb{R})$ such that
$\emph{supp}\; \widehat{v}_1 \subset \bigl\{\xi;\; |\xi|\sim N \bigl\}$ and
$\emph{supp}\; \widehat{v}_2 \subset \bigl\{\xi;\; |\xi|\ll N \bigl\}$. Then
$$\|v_1v_2\|_{L^4_xL^2_t}\lesssim N^{-1}\frac{1}{4s-1}\|v_1\|_{X_{0,\frac{1}{2}+}}\|v_2\|_{X_{s,\frac{1}{2}+}}.$$
\end{lemma}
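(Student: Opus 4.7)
The plan is to Littlewood--Paley decompose $v_2$ in frequency, apply H\"older in the mixed norms $L^\infty_x L^2_t$ and $L^4_x L^\infty_t$, and sum the resulting dyadic pieces. The decisive gain $N^{-1}$ will come from the sharp Kato smoothing effect applied to the high-frequency factor $v_1$, while the constant $(4s-1)^{-1}$ will emerge from a geometric series that converges only when $s > 1/4$.

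Concretely, I write $v_2 = \sum_M v_2^M$, where $M$ ranges over dyadic integers with $M \ll N$ and $v_2^M$ is the piece with frequency support $|\xi|\sim M$ (folding the residual $|\xi|\lesssim 1$ region into the $M=1$ piece). For each $M$, H\"older yields
\[
\|v_1 v_2^M\|_{L^4_x L^2_t} \leq \|v_1\|_{L^\infty_x L^2_t}\, \|v_2^M\|_{L^4_x L^\infty_t}.
\]
Since $\widehat{v}_1$ is supported in $\{|\xi|\sim N\}$, inverting one derivative and applying the transferred Kato smoothing $\|\partial_x w\|_{L^\infty_x L^2_t} \lesssim \|w\|_{X_{0,1/2+}}$ gives the crucial bound
\[
\|v_1\|_{L^\infty_x L^2_t} \lesssim N^{-1}\|\partial_x v_1\|_{L^\infty_x L^2_t} \lesssim N^{-1}\|v_1\|_{X_{0,1/2+}}.
\]
On the other hand, the Strichartz estimate (c) of Lemma~\ref{Strichart-Estimate-Airy} combined with the frequency localization of $v_2^M$ gives
\[
\|v_2^M\|_{L^4_x L^\infty_t} \lesssim \|v_2^M\|_{X_{1/4,1/2+}} \lesssim \langle M\rangle^{1/4-s}\|v_2^M\|_{X_{s,1/2+}} \leq \langle M\rangle^{1/4-s}\|v_2\|_{X_{s,1/2+}}.
\]

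Collecting everything and summing over $M$,
\[
\|v_1 v_2\|_{L^4_x L^2_t} \lesssim N^{-1}\|v_1\|_{X_{0,1/2+}}\|v_2\|_{X_{s,1/2+}} \sum_{k \geq 0} 2^{k(1/4-s)},
\]
and for $s > 1/4$ the geometric series evaluates to $(1-2^{-(s-1/4)})^{-1}$, which behaves like $4/(4s-1)$ as $s \to 1/4^+$, producing the stated constant. The main technical point is the correct transfer of the Kato $L^\infty_x L^2_t$ smoothing effect from the free Airy group to arbitrary functions in $X_{0,1/2+}$: this is a standard but essential application of the transfer principle for Bourgain spaces, since it is precisely the step that harvests the decisive $N^{-1}$ gain from the high-frequency support of $v_1$. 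A secondary subtlety is verifying that the crude triangle-inequality summation in $M$ introduces no further logarithmic losses beyond the factor $(4s-1)^{-1}$, which is legitimate because the series is absolutely convergent as soon as $s > 1/4$.
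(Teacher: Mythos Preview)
Your argument is correct and follows the standard route: Kato smoothing on the high-frequency factor $v_1$ yields the $N^{-1}$ gain, the maximal function estimate handles each dyadic piece $v_2^M$ of the low-frequency factor, and the geometric sum over $M$ produces the $(4s-1)^{-1}$ constant. The paper itself does not give a proof of this lemma at all---it simply refers the reader to \cite{Carvajal}---so there is no in-house argument to compare against; your approach is essentially the one used in that reference.

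One small point worth tightening: Lemma~\ref{Strichart-Estimate-Airy}(c) in the paper is stated only on a finite time interval $[0,\delta]$, whereas the lemma you are proving is written with global-in-time norms. The global $L^4_xL^\infty_t$ maximal estimate for the Airy group holds with the \emph{homogeneous} $\dot H^{1/4}$ norm on the right, which is harmless here because each $v_2^M$ is frequency-localized (so homogeneous and inhomogeneous norms coincide up to constants), but you should say this explicitly rather than invoke the time-local statement (c) directly.
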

\begin{proof} This result was proved in \cite{Carvajal}.
\end{proof}

\subsection{Multi-linear estimate} Now we prove a $6$-linear estimate for the functional $\varphi$ introduced in (\ref{Functional-Varphi})
in terms of the localized Bourgain space  $X^{\delta}_{1,\frac{1}{2}+}$ with the norm defined in \eqref{xsb-rest}.

\begin{proposition}\label{ACQ-Prop-Functional-Grow}Let $u_1(x,t),\dots,u_6(x,t)\in \mathcal{S}(\mathbb{R}\times \mathbb{R})$, then we have
\begin{equation}\label{ACQ-Prop-Functional-Grow-M6}
\left|\int_0^{\delta}\Lambda_6(M_6; u_1, \dots, u_6)dt\right|\lesssim N^{-3}\prod_{j=1}^{6}\|Iu_j\|_{X^{\delta}_{1, \frac{1}{2}+}}.
\end{equation}
\end{proposition}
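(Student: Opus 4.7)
The plan is to treat the six-linear form by a dyadic decomposition combined with the pointwise bound from Lemma~\ref{Mn-Est-Lemma} and the Strichartz/bilinear machinery of Section~\ref{sec5}. First, decompose $u_j=\sum_{N_j}u_{j,N_j}$ dyadically and reorder the scales as $N_{h_1}\ge N_{h_2}\ge\cdots\ge N_{h_6}$; the constraint $\sum_j\xi_j=0$ on $\Gamma_6$ forces $N_{h_1}\sim N_{h_2}$. Renormalize via $w_j:=\langle D\rangle Iu_j$, so that $\|w_j\|_{X^{\delta}_{0,\frac12+}}=\|Iu_j\|_{X^{\delta}_{1,\frac12+}}$, and rewrite the six-linear form with the effective multiplier $\widetilde{M}_6:=M_6/\prod_j\langle\xi_j\rangle m(\xi_j)$. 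The bound $|M_6|\lesssim m^2(N_{h_1})N_{h_1}$ from Lemma~\ref{Mn-Est-Lemma}, combined with the comparability $N_{h_1}\sim N_{h_2}$ and the monotonicity $m(N_j)\ge m(N_{h_1})$, yields the pointwise estimate
\[
|\widetilde{M}_6|\;\lesssim\;\frac{1}{N_{h_1}\prod_{j=h_3}^{h_6}\langle N_j\rangle\,m(N_j)}.
\]

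Next, I would pass from the Fourier-side form $\Lambda_6$ to a physical-side integral by Lemma~\ref{ACQ-Lemma-Pre-1} (the substitution $\widehat{w}_j\mapsto|\widehat{w}_j|$ leaves all $X_{0,b}$ and Strichartz norms unchanged), so the task reduces to bounding $\int_0^{\delta}\!\!\int_{\mathbb R}\prod_{j=1}^{6}\tilde w_{j,N_j}\,dx\,dt$. The natural H\"older arrangement in space-time is
\[
L^4_xL^2_t\cdot L^4_xL^2_t\cdot L^4_xL^{\infty}_t\cdot L^4_xL^{\infty}_t=L^1_{t,x},
\]
applying the bilinear estimate of Lemma~\ref{Extra-Estimate} on the high-low pairs $(w_{h_1,N_{h_1}},w_{h_5,N_{h_5}})$ and $(w_{h_2,N_{h_2}},w_{h_6,N_{h_6}})$ (which gains $N_{h_1}^{-1}N_{h_2}^{-1}$) and Lemma~\ref{Strichart-Estimate-Airy}(c) on $w_{h_3,N_{h_3}}$ and $w_{h_4,N_{h_4}}$ (at a cost of $N_{h_j}^{1/4}$ each). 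Multiplying by the symbol bound yields the net coefficient
\[
\frac{1}{N_{h_1}^{3}}\prod_{j=h_3}^{h_6}\frac{N_{h_j}^{1/4}}{\langle N_j\rangle\,m(N_j)},
\]
while the remaining $\prod_j\|w_{j,N_j}\|_{X_{0,\frac12+}}$ reassembles, after dyadic summation, into $\prod_j\|Iu_j\|_{X^{\delta}_{1,\frac12+}}$.

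For $s>\tfrac14$ each factor $N_j^{1/4}/(\langle N_j\rangle m(N_j))$ is summable dyadically (it behaves like $N_j^{-3/4}$ on $\{N_j\le N\}$ and like $N_j^{1/4-s}/N^{1-s}$ on $\{N_j>N\}$), and $\sum_{N_{h_1}\gtrsim N}N_{h_1}^{-3}\lesssim N^{-3}$ delivers the claimed decay. The restriction to $N_{h_1}\gtrsim N$ is the non-trivial point: after symmetrizing $M_6$ over the permutations of $(\xi_1,\ldots,\xi_6)$ (legitimate since the functional is ultimately evaluated on a single solution $u$ in every slot), the symmetrized multiplier vanishes identically on the region $\{|\xi_j|\le N\text{ for every }j\}$, because there $M_4\equiv 1$ and the symmetrization of $\xi_{456}$ over $S_6$ is zero on $\Gamma_6$. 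The main obstacle is the sub-case analysis required when the clean high-low structure needed for Lemma~\ref{Extra-Estimate} breaks down, e.g.\ when $N_{h_5}\sim N_{h_1}$; in those regimes the additional denominator factors $\langle N_j\rangle m(N_j)$ supply more than enough compensating decay, so the binding configuration is precisely the high-high-low-low-low-low one treated above.
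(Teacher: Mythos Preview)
Your approach is essentially the same as the paper's: both combine the dyadic decomposition and the pointwise bound $|M_6|\lesssim m^2(N_{h_1})N_{h_1}$ from Lemma~\ref{Mn-Est-Lemma} with the bilinear $L^4_xL^2_t$ estimate (Lemma~\ref{Extra-Estimate}) on two high--low pairs and the $L^4_xL^\infty_t$ maximal bound (Lemma~\ref{Strichart-Estimate-Airy}(c)) on the remaining two factors, and both dispose of the region $N_{h_1}\ll N$ by the same symmetrization/conservation observation. The only cosmetic differences are the choice of pairings---the paper takes $(h_1,h_4),(h_2,h_5)$ with $L^4_xL^\infty_t$ on $h_3,h_6$ in its Case~B, where you take $(h_1,h_5),(h_2,h_6)$ with $L^4_xL^\infty_t$ on $h_3,h_4$---and the handling of your ``exceptional'' configuration $N_{h_5}\sim N_{h_1}$, which the paper isolates as a separate Case~A ($N\lesssim N_{h_4}$) treated directly with the $L^6$ Strichartz bound of Lemma~\ref{Strichart-Estimate-Airy}(b) rather than by invoking extra symbol decay.
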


\begin{proof}
Similar to the technique employed in \cite{CKSTT-2, CKSTT-3, CKSTT-4} we assume that $\widehat{u_j}$ are nonnegative functions and we perform
a  Littlewood-Paley decomposition, where each $\widehat{u_j}$ is restricted to a dyadic frequency ($|\xi_j|\sim N_j$).

Next, we employ the arguments analogous to those used in \cite{Carvajal}. We divide the analysis in the following cases:

\noindent{\textbf{\emph{Case A:}} $\boldsymbol{N \lesssim  N_{h_4}}$.} In this case, using the definition of the function $m(\xi)$, we have
\begin{equation*}\label{Grow-M6-1}
m(N_{h_j})N_{h_j}=\left(\frac{N}{N_{h_j}}\right)^{1-s} N_{h_j}= N^{1-s}N_{h_j}^s\gtrsim N\quad \text{for}\quad j=1,\dots,4.
\end{equation*}
Consequently, using the estimate \eqref{M6-Est-Lemma-1} from Lemma \ref{Mn-Est-Lemma}, one gets
\begin{equation}\label{Grow-M6-2}
|M_6(\xi_1, \dots,\xi_6)|\lesssim N_{h_1}m^2(N_{h_1})\le N^{-3}N_{h_1}m(N_{h_1})N^3\lesssim N^{-3}\prod_{j=1}^{4}N_{h_j}m(N_{h_j}).
\end{equation}

Now, from (\ref{Grow-M6-2}), using Lemma \ref{ACQ-Lemma-Pre-1}, H\"older inequality Lemma \ref{Strichart-Estimate-Airy}-(b) and the regularizing property of the operator $I$ from \eqref{sm-1}, we obtain
\begin{equation}\begin{split}\label{Grow-M6-3}
\left|\int_0^{\delta}\Lambda_6(M_6; u_1, \dots, u_6)dt\right|&\lesssim \int_0^{\delta}\int_{\Gamma_6}|M_6(\xi_1, \dots, \xi_6)|\prod_{j=1}^{6}\hat{u}(\xi_j)\;dS_{\xi}dt\\
&\lesssim N^{-3}\int_0^{\delta}\int_{\mathbb{R}}\prod\limits_{j\in \mathcal{H}_4}D_xIu_j
\prod\limits_{j\not \in \mathcal{H}_4}u_j\,dx dt\\
&\lesssim N^{-3}\prod\limits_{j\in \mathcal{H}_4}\|D_xIu_j\|_{L^6_{\delta} L^6_x}\prod\limits_{j\not \in \mathcal{H}_4}\|u_j\|_{L^6_{\delta} L^6_x}\\
&\lesssim N^{-3}\prod\limits_{j\in \mathcal{H}_4}\|Iu_j\|_{X^{\delta}_{1, \frac{1}{2}+}}
\prod\limits_{j\not \in \mathcal{H}_4}\|Iu_j\|_{X^{\delta}_{1-s, \frac{1}{2}+}}\\
&\lesssim N^{-3}\prod\limits_{j=1}^{6}\|Iu_j\|_{X^{\delta}_{1, \frac{1}{2}+}}.
\end{split}\end{equation}

\noindent{\textbf{\emph{Case B:}} $\boldsymbol{ N_{h_4} \ll N}$.} First we consider  the higher frequency $N_{h_1}\ll N$. In this subcase,  $m(\xi)=1$, and consequently we have $\dfrac{d}{dt}E^2(u)=0$. Hence,  from  (\ref{second-m3}) we get $\Lambda_6(M_6)=0$ and (\ref{ACQ-Prop-Functional-Grow-M6})
is trivial in this subcase.

Next we analyze the complementary subcase: $N_{h_1} \gtrsim N$. Since $\xi_{h_1}+\cdots +\xi_{h_6}=0$ with
$$|\xi_{h_1}|\ge|\xi_{h_2}|\ge \dots \ge|\xi_{h_6}|$$
implies  that
$$|\xi_{h_2}|\le |\xi_{h_1}|=|-\xi_{h_2}\cdots -\xi_{h_6}|\le 5|\xi_{h_2}|$$
we conclude that $N_{h_1}\thicksim N_{h_2} \gtrsim N$ and consequently $N\lesssim m(N_{h_2})N_{h_2}$. Hence, from Lemma \ref{Mn-Est-Lemma}, we have
\begin{equation}\label{Grow-M6-4}
|M_6(\xi_1, \dots,\xi_6)|\lesssim N_{h_1}m^2(N_{h_1})\le N^{-1}N_{h_1}m(N_{h_1})N\lesssim N^{-1}\prod_{j=1}^{2}N_{h_j}m(N_{h_j}).
\end{equation}
Now, using (\ref{Grow-M6-4}) and applying  Lemma \ref{ACQ-Lemma-Pre-1}, Fubini theorem and  H\"older inequality, Lemma \ref{Extra-Estimate}
and Lemma \ref{Strichart-Estimate-Airy}-(c) we obtain

\begin{equation*}\begin{split}\label{Grow-M6-5}
\left|\int_0^{\delta}\Lambda_6(M_6; u_1, \dots, u_6)dt\right|&\lesssim \int_0^{\delta}\int_{\Gamma_6}|M_6(\xi_1, \dots, \xi_6)|\prod_{j=1}^{6}\hat{u}(\xi_j)\;dS_{\xi}dt\\
&\lesssim N^{-1}\int_{\mathbb{R}}\int_0^{\delta}\prod\limits_{j\in \mathcal{H}_2}D_xIu_j\prod\limits_{j\not \in \mathcal{H}_2}u_j\, dt dx\\
&\lesssim N^{-1} \|(D_xIu_{h_1})u_{h_4}\|_{L^4_xL^2_{\delta}}\|(D_xIu_{h_2})u_{h_5}\|_{L^4_xL^2_{\delta}}
\|u_{h_3}u_{h_6}\|_{L^2_xL^{\infty}_{\delta}}\\
&\lesssim N^{-3} \prod\limits_{j=1}^{2}\|Iu_{h_j}\|_{X^{\delta}_{1,\frac{1}{2}+}}\prod\limits_{j=4}^{5}\|u_{h_j}\|_{X^{\delta}_{\frac{1}{4}+,\frac{1}{2}+}}
\prod\limits_{j\in\{3,6\}}\|u_{h_j}\|_{L^4_xL^{\infty}_{\delta}}\\
&\lesssim N^{-3}\prod\limits_{j=1}^{2}\|Iu_{h_j}\|_{X^{\delta}_{1,\frac{1}{2}+}}
\prod\limits_{j=4}^{5}\|Iu_{h_j}\|_{X^{\delta}_{1-s+\frac{1}{4}+,\frac{1}{2}+}}
\prod\limits_{j\in\{3,6\}}\|u_{h_j}\|_{X^{\delta}_{\frac{1}{4},\frac{1}{2}+}}\\
&\lesssim N^{-3}\prod\limits_{j=1}^{6}\|Iu_j\|_{X^{\delta}_{1, \frac{1}{2}+}},
\end{split}\end{equation*}
for all $\frac 14< s < 1$.
\end{proof}

\begin{remark}\label{rmk-1}
With the use of the estimate for $M_4$ in \eqref{M4-Est-Lemma-1} and Lemma \ref{ACQ-Lemma-Pre-1}, it is easy to obtain
\begin{equation}\label{m4-1}
|\Lambda_4(M_4;\; u_1, \dots, u_4)|\lesssim \prod\limits_{j=1}^{4}\|Iu_j\|_{H^1}.
\end{equation}
\end{remark}

We finish this section with the following theorem which says that the second modified energy $E^2(u(t))$ is an almost conserved quantity.
\begin{proposition}\label{almst-cons}
Let $\delta >0$ be given. Then the second generation of the modified energy $E^2(u(t))$ defined in \eqref{sec-m1} satisfies
\begin{equation}\label{alms-2}
|E^2(u(\delta))| \leq |E^2(\phi)| + CN^{-3}\|Iu\|_{X_{1, \frac12+}^{\delta}}^6.
\end{equation}
\end{proposition}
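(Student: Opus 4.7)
The plan is to integrate the pointwise-in-time identity for $\dfrac{d}{dt}E^2(u)$ obtained in Section~\ref{sec3} and then invoke the $6$-linear bound from Proposition~\ref{ACQ-Prop-Functional-Grow}. Recall that for the choice of $M_4$ in \eqref{m.4} the $\Lambda_4$-term in \eqref{sec-m2} vanishes, which leaves the clean identity
\begin{equation*}
\frac{d}{dt}E^2(u(t)) \;=\; \frac{i}{3}\Lambda_6\bigl(M_6;\,u,\dots,u\bigr)
\end{equation*}
from \eqref{second-m3}. Integrating this in time on $[0,\delta]$ and using the fundamental theorem of calculus yields
\begin{equation*}
E^2(u(\delta)) - E^2(\phi) \;=\; \frac{i}{3}\int_{0}^{\delta}\Lambda_6(M_6;\,u,\dots,u)\,dt,
\end{equation*}
and hence, applying the triangle inequality,
\begin{equation*}
|E^2(u(\delta))| \;\le\; |E^2(\phi)| + \tfrac{1}{3}\left|\int_{0}^{\delta}\Lambda_6(M_6;\,u,\dots,u)\,dt\right|.
\end{equation*}

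The conclusion then follows immediately by applying Proposition~\ref{ACQ-Prop-Functional-Grow} with all six slots equal to $u$, giving
\begin{equation*}
\left|\int_{0}^{\delta}\Lambda_6(M_6;\,u,\dots,u)\,dt\right| \;\lesssim\; N^{-3}\,\|Iu\|_{X^{\delta}_{1,\frac{1}{2}+}}^{6},
\end{equation*}
which is precisely the bound \eqref{alms-2}.

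The main technical concern is a justification issue rather than a new estimate: the computation leading to $\frac{d}{dt}E^2(u)=\frac{i}{3}\Lambda_6(M_6)$ was carried out formally on smooth solutions via Lemma~\ref{derivative}, so one must verify that the identity, and therefore its integrated form, passes to the limit for solutions living only in $X^{\delta}_{s,\frac{1}{2}+}$ with $s>\tfrac14$. The standard way to handle this is to approximate $\phi\in H^s(\R)$ by a sequence of Schwartz data $\phi_n$, for which the associated solutions $u_n$ are smooth and decaying so the $n$-linear manipulations of Section~\ref{sec3} are rigorous, obtain \eqref{alms-2} for each $u_n$, and then pass to the limit using continuous dependence of the local theory in $X^{\delta}_{s,\frac{1}{2}+}$, the continuity of $E^2$ on $H^1$-bounded sets (guaranteed by Remark~\ref{rmk-1} via the $\Lambda_4(M_4)$ piece together with the explicit $\Lambda_2$ piece), and the continuity of the $6$-linear form on the right-hand side, which is exactly what Proposition~\ref{ACQ-Prop-Functional-Grow} provides. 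Once this approximation step is in place the proof reduces to the one-line integration above.
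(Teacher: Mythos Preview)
Your argument is correct and matches the paper's own proof, which simply cites the identity \eqref{second-m3} and the estimate \eqref{ACQ-Prop-Functional-Grow-M6} from Proposition~\ref{ACQ-Prop-Functional-Grow}. The additional density/approximation paragraph you supply is not in the paper but is a reasonable justification detail, not a departure in approach.
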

\begin{proof}
The proof follows by using the identity \eqref{second-m3} and estimate \eqref{ACQ-Prop-Functional-Grow-M6} from Proposition \ref{ACQ-Prop-Functional-Grow}.
\end{proof}

%%%%%%%%%%%%%%%%%%%%%%%%%%%%%%%%%%%%%%%%%%%%%%%%%%%%%%%%%%%%%%%%%%%%%
%%%%%%%%%%%%%%%%%%%%%%%%%%%%%%%%%%%%%%%%%%%%%%%%%%%%%%%%%%%%%%%%%%%%
\section{Rescaling and Iteration: global results for the mKdV equation}
%%%%%%%%%%%%%%%%%%%%%%%%%%%%%%%%%%%%%%%%%%%%%%%%%%%%%%%%%%%%%%%%%%%%%%%%%%%%
%%%%%%%%%%%%%%%%%%%%%%%%%%%%%%%%%%%%%%%%%%%%%%%%%%%%%%%%%%%%%%%%%%%%%%%%%%%%

\subsection{Variant of the local well-posedness} Now we give the auxiliary local results that will be useful in the proof of
the global results.

\begin{theorem}\label{local-variant}
Let $s\geq \frac14$, then for any $\phi$ such that $I\phi\in H^1$, there exist
$\delta = \delta(\|I\phi\|_{H^1})$ (with $\delta(\rho)\to \infty$ as $\rho\to 0$) and a unique solution to the IVP \eqref{ivpmkdv} in the time interval $[0, \delta]$. Moreover, the solution satisfies the estimate
\begin{equation}\label{variant-2}
\|Iu\|_{X^{\delta}_{1, b}}\lesssim \|I\phi\|_{H^1},
\end{equation}
and the local existence time $\delta$ can be chosen satisfying
\begin{equation}\label{delta-var}
\delta \lesssim \|I\phi\|_{H^1}^{-\frac2\theta},
\end{equation}
where $\theta>0$ is as in the proof of Theorem \ref{loca}.
\end{theorem}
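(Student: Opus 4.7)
The plan is to mimic the proof of Theorem \ref{loca} but at the level of $Iu$ rather than $u$, working in $X_{1,b}^{\delta}$ instead of $X_{s,b}^{\delta}$. Since $I$ is a Fourier multiplier in $x$ it commutes with $U(t)=e^{-t\p_x^3}$, so applying $I$ to the Duhamel formula \eqref{Duhamel} yields
$$Iu(t)=\psi_1(t)U(t)I\phi-\psi_{\delta}(t)\int_0^t U(t-t')\p_x I(u^3)(t')\,dt'.$$
The linear estimate \eqref{lin.1} and the inhomogeneous estimate \eqref{nlin.1} from Lemma \ref{lemma1}, both applied at regularity $s=1$, reduce everything to proving a modified trilinear estimate
$$\|\p_x I(u^3)\|_{X_{1,b'}}\lesssim \|Iu\|_{X_{1,b}}^3,$$
which plays the role that \eqref{tlin} played in Theorem \ref{loca}.

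To establish this key estimate, the plan is to follow the $I$-method interpolation framework of \cite{CKSTT, CKSTT-2}: decompose the Fourier side dyadically according to the relative sizes of $|\xi_1|,|\xi_2|,|\xi_3|$ and the output frequency $|\xi|=|\xi_1+\xi_2+\xi_3|$. In the regime where all input frequencies are $\lesssim N$, the operator $I$ acts as the identity and the bound follows directly from \eqref{tlin} at $s=1$, combined with $m(\xi)\le 1$. In the regime where at least one input has frequency $\gg N$, I would use that $m$ is even and nonincreasing, together with the resonance identity \eqref{Arithmetic-Fact}, in the same spirit as the multiplier analysis in Section \ref{sec4} and as in the proof of Proposition \ref{ACQ-Prop-Functional-Grow}.

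With the modified trilinear estimate in hand, the contraction mapping argument is the exact analogue of the one for Theorem \ref{loca}. I would consider the ball
$$\mathcal{B}=\bigl\{w\in X_{1,b}^{\delta}:\,\|w\|_{X_{1,b}^{\delta}}\le M\bigr\}$$
with $M=2C\|I\phi\|_{H^1}$ and choose $\delta$ so that $C\delta^{\theta}M^2\le \tfrac12$, which forces $\delta\lesssim \|I\phi\|_{H^1}^{-2/\theta}$ as required in \eqref{delta-var}. The map
$$\Phi[w](t)=\psi_1(t)U(t)I\phi-\psi_{\delta}(t)\int_0^t U(t-t')\p_x I\bigl((I^{-1}w)^3\bigr)(t')\,dt'$$
is then a contraction on $\mathcal{B}$; its fixed point is $Iu$ and satisfies the a priori bound \eqref{variant-2}. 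Uniqueness is inherited from Theorem \ref{loca} via the smoothing property \eqref{sm-1}, which ensures any $Iu\in X_{1,b}^{\delta}$ corresponds to a unique $u\in X_{s,b}^{\delta}$ for $s\ge\tfrac14$.

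The main obstacle is the modified trilinear estimate, specifically in the frequency configuration where two or three input frequencies are large and comparable while the output frequency is small: in this regime one cannot simply absorb the output multiplier $m(\xi)$ into the product $m(\xi_1)m(\xi_2)m(\xi_3)$ by monotonicity, since the ratio $m(\xi)/[m(\xi_1)m(\xi_2)m(\xi_3)]$ is not uniformly bounded. The required improvement has to come from exploiting near-resonance cancellation through the algebraic identity \eqref{Arithmetic-Fact}, which is precisely the kind of case analysis already performed in Lemma \ref{Mn-Est-Lemma}.
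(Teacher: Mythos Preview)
Your overall architecture is correct: apply $I$ to the Duhamel formula, use the linear and inhomogeneous estimates at regularity $1$, and reduce to the modified trilinear estimate
\[
\|\p_x I(u^3)\|_{X_{1,b'}}\lesssim \|Iu\|_{X_{1,b}}^3,
\]
after which the contraction argument and the choice of $\delta$ are identical to Theorem \ref{loca}. This is exactly what the paper does.

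Where you diverge from the paper is in how to obtain the modified trilinear estimate. The paper does not do any frequency case analysis at all: it simply invokes the \emph{interpolation lemma} (Lemma 12.1 of \cite{CKSTT-5}), which is a black-box principle stating that any translation-invariant multilinear $X_{s,b}$ estimate valid for all $s\ge s_0$ automatically transfers to its $I$-modified form at regularity $1$, with implicit constant independent of $N$. Applied to \eqref{tlin} (which holds for every $s\ge\tfrac14$), this gives the needed bound in one line.

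Your proposed route---a direct dyadic decomposition with the resonance identity \eqref{Arithmetic-Fact} and the multiplier bounds of Lemma \ref{Mn-Est-Lemma}---is not the right tool for the obstacle you correctly flag. The analysis in Section \ref{sec4} controls the \emph{correction multipliers} $M_4,M_6$ arising from the modified energy; it does not address the symbol $m(\xi_1+\xi_2+\xi_3)\langle\xi_1+\xi_2+\xi_3\rangle/\prod m(\xi_j)\langle\xi_j\rangle$ that appears when you try to pass from $\|(u^3)_x\|_{X_{1,b'}}$ to $\|I(u^3)_x\|_{X_{1,b'}}$. The cancellation in \eqref{Arithmetic-Fact} lives in the $\tau$-variable (it measures the resonance function $\xi^3-\xi_1^3-\xi_2^3-\xi_3^3$) and is already consumed in Tao's proof of \eqref{tlin}; it does not give you an extra gain to absorb the bad ratio $m(\xi)/\prod m(\xi_j)$ in the high-high-high $\to$ low regime. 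What actually saves you is that \eqref{tlin} holds not just at $s=1$ but at \emph{every} $s\ge\tfrac14$: in the high-frequency regime one downgrades to a smaller $s$ and the resulting $\langle\xi_j\rangle^{1-s}$ slack exactly compensates the $m(\xi_j)^{-1}\sim (N_j/N)^{1-s}$ loss. This is precisely the mechanism encoded in the interpolation lemma, and invoking it is both shorter and avoids reopening the proof of \eqref{tlin}.
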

\begin{proof}
 Note that, applying the interpolation lemma (Lemma 12.1 in \cite{CKSTT-5}) to \eqref{tlin} we obtain, under the same assumptions on the parameters $s$, $b$ and $b'$ that
 \begin{equation}\label{tlin-I}
 \|I(u^3)_x\|_{X_{1, b'}}\lesssim \|Iu\|_{X_{1,b}}^3,
 \end{equation}
 where the implicit constant does not depend on the parameter $N$ appearing in the definition of the operator $I$.

 Now, using the trilinear estimate \eqref{tlin-I}, one can complete the proof of this theorem exactly as in the proof of Theorem \ref{loca}. So, we omit the details.
\end{proof}

%%%%%%%%%%%%%%%%%%%%%%%%%%%%%%%%%%%%%%%%%%%%%%%%%%%%%%%%%%%%%%%%%%%%%%%%
%%%%%%%%%%%%%%%%%%%%%%%%%%%%%%%%%%%%%%%%%%%%%%%%%%%%%%%%%%%%%%%%%%%%%%%

\subsection{Proof of global result for the mKdV equation}
%%%%%%%%%%%%%%%%%%%%%%%%%%%%%%%%%%%%%%%%%%%%%%%%%%%%%%%%%%%%%%%%%%%%%%%
%%%%%%%%%%%%%%%%%%%%%%%%%%%%%%%%%%%%%%%%%%%%%%%%%%%%%%%%%%%%%%%%%%%%

\begin{proof}[Proof of Theorem \ref{mkdvglob}] Given any $T>0$, we are interested in extending the local solution to the IVP \eqref{ivpmkdv} to the interval $[0, T]$.

To make the analysis a bit easy we use the scaling argument. If $u(x,t)$ solves the IVP \eqref{ivpmkdv} with initial data $\phi(x)$ then for $1<\lambda<\infty$, so does $u^{\lambda}(x,t)$ with initial data $\phi^{\lambda}(x)$; where  $u^{\lambda}(x,t)= \frac1\lambda u(\frac x\lambda, \frac t{\lambda^3})$ and $\phi^{\lambda}(x)=\frac1\lambda\phi(\frac x\lambda)$.

Now, our interest is in extending the solution $u^{\lambda}$ to the bigger time interval $[0, \lambda^3T]$.

Observe that
\begin{equation}\label{g-1}
\|\phi^{\lambda}\|_{H^s}\lesssim \frac1{\lambda^{s+\frac 12}}\|\phi\|_{H^s}.
\end{equation}
From this observation and \eqref{sm-1} we have that
\begin{equation}\label{g-2}
\|I\phi^{\lambda}\|_{H^1}\lesssim N^{1-s}\frac1{\lambda^{s+\frac 12}}\|\phi\|_{H^s}.
\end{equation}

If we choose the parameter $\lambda=\lambda(N)$ suitable, we can make $\|I\phi^{\lambda}\|_{H^1}$ as small as we please. In fact, by choosing
\begin{equation}\label{g-4}
\lambda \sim N^{\frac{2(1-s)}{1+2s}},
\end{equation}
we can make
\begin{equation}\label{g-5}
\|I\phi^{\lambda}\|_{H^1}\leq \epsilon.
\end{equation}

Now, from \eqref{g-5} and \eqref{delta-var}, we can guarantee that the rescaled solution $Iu^{\lambda}$ exists in the time interval $[0, 1]$.

Moreover, for this choice of $\lambda$, from \eqref{sec-m1}, using Plancherel identity, \eqref{m4-1}   and \eqref{g-5}, we have
\begin{equation}\label{g-6}
|E^2(\phi^{\lambda})|\lesssim \|I\phi^{\lambda}\|_{H^1}^2 + \|I\phi^{\lambda}\|_{H^1}^4\leq \epsilon^2+\epsilon^4\lesssim \epsilon^2.
\end{equation}

Using the almost conservation law \eqref{alms-2} for the modified energy, \eqref{variant-2}, \eqref{g-5} and \eqref{g-6},  we obtain
\begin{equation}\label{g-7}
\begin{split}
|E^2(u^{\lambda})(1)|&\lesssim |E^2(\phi^{\lambda})| +N^{-3}\|Iu^{\lambda}\|_{X_{1, \frac12+}^1}^6\\
&\lesssim \epsilon^2+N^{-3}\epsilon^6\\
&\lesssim \epsilon^2+N^{-3}\epsilon^2.
\end{split}
\end{equation}

From \eqref{g-6}, it is clear that we can iterate this process $N^3$ times before doubling the modified energy $|E^2(u^{\lambda})|$. Therefore, by taking $N^3$ times steps if size $O(1)$, we can extend the rescaled solution to the interval $[0, N^3]$. As we are interested in extending the the solution to the interval $[0, \lambda^3T]$, we must select $N=N(T)$ such that $\lambda^3T\leq N^3$. Therefore, with the choice of $\lambda$ in \eqref{g-4}, we must have
\begin{equation}\label{g-8}
TN^{\frac{3-12s}{1+2s}}\leq c.
\end{equation}

Hence, for arbitrary $T>0$ and large $N$, \eqref{g-8} is possible if $s>\frac14$. This completes the proof of the theorem.
\end{proof}

%%%%%%%%%%%%%%%%%%%%%%%%%%%%%%%%%%%%%%%%%%%%%%%%%%%%%%%%%%%%%%%%%%%%%%%%%%%%%%%%%%%%%%%%%%%%%%%%%%%%%%%%%%
%%%%%%%%%%%%%%%%%%%%%%%%%%%%%%%%%%%%%%%%%%%%%%%%%%%%%%%%%%%%%%%%%%%%%%%%%%%%%%%%%%%%%%%%%%%%%%%%%%%%%%%%%%
\section{Rescaling and Iteration: global results for th mKdV system }
%%%%%%%%%%%%%%%%%%%%%%%%%%%%%%%%%%%%%%%%%%%%%%%%%%%%%%%%%%%%%%%%%%%%%%%%%%%%%%%%%%%%%%%%%%%%%%%%%%%%%%%%%%%
%%%%%%%%%%%%%%%%%%%%%%%%%%%%%%%%%%%%%%%%%%%%%%%%%%%%%%%%%%%%%%%%%%%%%%%%%%%%%%%%%%%%%%%%%%%%%%%%%%%%%%%%%%

In this section, we deal with the global well-posedness for the mKdV system \eqref{ivp-sy}.
Here also, we follow the I-method and almost conserved quantities to achieve the goal. Several notations introduced and estimates obtained for the single mKdV equation in the earlier sections will be useful.

We start with the time-derivative rule for an $n$-multiplier for solution $(u,v)$ to the mKdV system \eqref{ivp-sy}.
\begin{lemma}\label{sy-deriv}
Let $(u_i, v_i)$ be the $n$-copies of a solution $(u, v)$ to  the IVP \eqref{ivp-sy} and $M_n$ be a symmetric $n$-multiplier, then
\begin{align}
&\frac{d}{dt}\Lambda_n(M_n; u_1, \dots, u_n) = \Lambda_n(M_n\alpha_n; u_1, \dots, u_n)-i\,n\Lambda_{n+2}(a_n(u_i,v_i)),\label{der-s1} \\ 
&\frac{d}{dt}\Lambda_n(M_n; v_1, \dots, v_n) = \Lambda_n(M_n\alpha_n; v_1, \dots, v_n)-i\,n\Lambda_{n+2}(a_n(v_i,u_i)), \label{der-s2}
\end{align}
where
$a_n(f_i, g_i)=M_n(\xi_1, \dots, \xi_{n-1}, \xi_n+\xi_{n+1}+\xi_{n+2})(\xi_n+\xi_{n+1}+\xi_{n+2}); f_1, \dots, f_n, g_{n+1}, g_{n+2})$
and $\alpha_n = i(\xi_1^3+\cdots +\xi_n^3)$. 
\end{lemma}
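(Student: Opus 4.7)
The proof follows the pattern of Lemma \ref{derivative} for the single equation, adapted to the mixed nonlinearities of the system. The idea is to differentiate the $n$-linear functional in time, use the PDE to replace $\partial_t u$ (respectively $\partial_t v$) by its spatial expression, and then re-group the resulting contributions into the claimed $n$- and $(n+2)$-linear pieces.

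I would start from the Fourier representation
\[
\Lambda_n(M_n;u_1,\ldots,u_n)=\int_{\Gamma_n}M_n(\xi_1,\ldots,\xi_n)\prod_{j=1}^n\widehat{u_j}(\xi_j),
\]
with every $u_j$ equal to the common solution $u$. Differentiating under the integral via the product rule, the derivative hits one $\widehat{u_k}$ at a time. Reading $\partial_x^3u$ as the dispersive term in the first equation of \eqref{ivp-sy} and taking the Fourier transform gives
\[
\partial_t\widehat{u}(\xi)=i\xi^3\,\widehat{u}(\xi)-i\xi\,\widehat{uv^2}(\xi).
\]
The contribution of the dispersive piece, summed over $k=1,\ldots,n$, immediately produces the factor $i(\xi_1^3+\cdots+\xi_n^3)=\alpha_n$ in front of $M_n$, yielding the first summand $\Lambda_n(M_n\alpha_n;u_1,\ldots,u_n)$ in \eqref{der-s1}.

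The nonlinear piece is handled by writing the convolution
\[
\widehat{uv^2}(\xi)=\int_{\eta_1+\eta_2+\eta_3=\xi}\widehat{u}(\eta_1)\widehat{v}(\eta_2)\widehat{v}(\eta_3),
\]
inserting it into the position-$k$ slot, and relabeling $(\eta_1,\eta_2,\eta_3)$ as three new Fourier variables. The result is an integral over $\Gamma_{n+2}$ whose multiplier is $M_n$ evaluated at the original frequencies but with $\xi_k$ replaced by $\xi_k+\xi_{n+1}+\xi_{n+2}$, multiplied by the factor $-i(\xi_k+\xi_{n+1}+\xi_{n+2})$ coming from $-i\xi$. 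By the symmetry of $M_n$, the $n$ terms generated by the different choices of $k$ coincide after permutation of the first $n$ arguments, so they collapse into a single $\Lambda_{n+2}$ with coefficient $-in$, matching the second term on the right of \eqref{der-s1}. Identity \eqref{der-s2} is obtained by the same reasoning applied to the second equation in \eqref{ivp-sy}: now the convolution expansion of $\widehat{u^2v}$ produces two $u$-factors and one $v$-factor at the new positions, which is precisely the content of $a_n(v_i,u_i)$.

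The main place to be careful is the symmetrization step: one must check that the symmetry of $M_n$ really does make the $n$ position-$k$ contributions identical after relabeling, and that the convention of putting the summed frequency $\xi_n+\xi_{n+1}+\xi_{n+2}$ in the last slot is legitimate. Beyond this combinatorial bookkeeping and careful tracking of the factors of $i$, no new analytic ingredient beyond that of Lemma \ref{derivative} is required; the only genuinely new feature is that the two independent fields $u$ and $v$ appear with the multiplicities dictated by the respective cubic nonlinearities $\partial_x(uv^2)$ and $\partial_x(u^2v)$.
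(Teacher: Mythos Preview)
Your argument is correct and is exactly the computation the paper has in mind: the lemma is stated without proof in the paper, being the direct analogue of Lemma~\ref{derivative} with the only change that the cubic nonlinearity now mixes the two fields. Your treatment of the Fourier side, the symmetrization using that $M_n$ is symmetric to place the ``split'' frequency in the last slot, and the bookkeeping of which factors are $u$'s and which are $v$'s (dictated by $uv^2$ versus $u^2v$) are all handled properly; the caveat you raise about relabeling is indeed the only point requiring care, and it goes through because the multiplier depends on the three new frequencies only through their sum.
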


Notice that \eqref{der-s1} and \eqref{der-s2} are symmetric on $u$ and $v$. So, we consider any one of them that suits the situation under consideration.

As earlier, we define the  first modified energy
\begin{equation}\label{me-s1}
E^1(u, v):= I_2(Iu, Iv),
\end{equation}
where $I_2$ is as defined in \eqref{con22}.

Using Plancherel identity, we can write the first modified energy in terms of the $n$-linear functional as
\begin{equation}\label{me-s2}
E^1(u, v)= -\Lambda_2(m_1\xi_1m_2\xi_2; u,u)-\Lambda_2(m_1\xi_1m_2\xi_2; v,v) -\Lambda_4(m_1m_2m_3m_4; u,u,v,v).
\end{equation}

We define the second generation of the modified energy as
\begin{equation}\label{me-s3}
E^2(u, v):= -\Lambda_2(m_1\xi_1m_2\xi_2; u,u)-\Lambda_2(m_1\xi_1m_2\xi_2; v,v)-\Lambda_4(\tilde{M}_4(\xi_1, \xi_2, \xi_3, \xi_4); u,u,v,v),
\end{equation}
where the multiplier $\tilde{M}_4$ is to be chosen later.

Now using the identity \eqref{der-s1}, \eqref{der-s2}, symmetrizing and using the fact that $m$ is even, we get
\begin{equation}\label{sec-s21}
\begin{split}
\frac{d}{dt} E^2(u,v) &=-\Lambda_2(m_1\xi_1m_2\xi_2\alpha_2; u,u) + 2i\Lambda_4(m_1\xi_1m(\xi_2+\xi_3+\xi_4)(\xi_2+\xi_3+\xi_4)^2; u,u,v,v)\\
&\quad-\Lambda_2(m_1\xi_1m_2\xi_2\alpha_2; v,v) + 2i\Lambda_4(m_1\xi_1m(\xi_2+\xi_3+\xi_4)(\xi_2+\xi_3+\xi_4)^2; v,v,u,u)\\
&\quad-\Lambda_4(\tilde{M}_4(\xi_1, \dots, \xi_4)\alpha_4; u,u,v,v) \\
&\quad + 4i\Lambda_6(\tilde{M}_4(\xi_1,\xi_2,\xi_3, \xi_4+\xi_5+\xi_6)(\xi_4+\xi_5+\xi_6); u,u,v,v,v,v).
\end{split}
\end{equation}

Note that, on $\Gamma_2$, $\alpha_2 =0$, and on $\Gamma_4$, $\xi_2+\xi_3+\xi_4 =-\xi_1$. Therefore, using the fact that $m$ is even and symmetrizing the multiplier on $\Lambda_4$, we get from \eqref{sec-s21} that
\begin{equation}\label{sec-s22}
\begin{split}
\frac{d}{dt} E^2(u,v) &=  4i\Lambda_4((m_1^2\xi_1^3+\cdots +m_4^2\xi_4^3)-\frac14\tilde{M}_4(\xi_1, \dots, \xi_4)(\xi_1^3+\xi_2^3+\xi_3^3+\xi_4^3));u,u,v,v)\\
& \quad  + 4i\Lambda_6(\tilde{M}_4(\xi_1, \xi_2, \xi_3, \xi_4+\xi_5+\xi_6)(\xi_4+\xi_5+\xi_6); u,u,v,v,v,v).
\end{split}
\end{equation}

If we choose,
\begin{equation}\label{me-s4}
\tilde{M}_4(\xi_1, \xi_2, \xi_3, \xi_4) = 4\frac{m_1^2\xi_1^3+\cdots +m_4^2\xi_4^3}{\xi_1^3+\xi_2^3+\xi_3^3+\xi_4^3},
\end{equation}
then we get $\Lambda_4 =0$.

So, for this choice of $M_4$, we have
\begin{equation}\label{sec-s3}
\frac{d}{dt} E^2(u,v) =4i\Lambda_6(\tilde{M}_4(\xi_1, \xi_2, \xi_3, \xi_4+\xi_5+\xi_6)(\xi_4+\xi_5+\xi_6); u,u,v,v,v,v)=:\Lambda_6(\tilde{M}_6).
\end{equation}

As in the single mKdV case, in what follows, we consider $\tilde{M}_4$ given by \eqref{me-s4} and $M_6$ defined by
\begin{equation}\label{sy-m6}
\tilde{M}_6 = \tilde{M}_4(\xi_1, \xi_2, \xi_3, \xi_{456})\xi_{456} =4\frac{m_1^2\xi_1^3+m_2^2\xi_2^3+m_3^2\xi_3^3+m^2(\xi_{456})\xi_{456}^3}{\xi_1^3+\xi_2^3+\xi_3^3+\xi_{456}^3} \xi_{456}.
\end{equation}

Since $\tilde{M}_4$ and $\tilde{M}_6$ are the constant multiples of $M_4$ and $M_6$ respectively, as in Lemma \ref{Mn-Est-Lemma}, we get the following bounds
\begin{align}
&|\tilde{M}_4(\xi_1,\xi_2,\ldots,\xi_4)| \lesssim m^2(N_{h_1})\label{sy-m4-bd}
\intertext{and}
&|\tilde{M}_6(\xi_1,\xi_2,\ldots,\xi_6)| \lesssim m^2(N_{h_1})N_{h_1}.\label{sy-m6-bd}
\end{align}

Similarly as in Proposition \ref{ACQ-Prop-Functional-Grow} and Remark \ref{rmk-1},
for $u_1(x,t),\dots,u_6(x,t)\in \mathcal{S}(\mathbb{R}\times \mathbb{R})$, one can obtain
\begin{equation}\label{sy-M6.1}
\left|\int_0^{\delta}\Lambda_6(\tilde{M}_6; u_1, \dots, u_6)dt\right|\lesssim N^{-3}\prod_{j=1}^{6}\|Iu_j\|_{X^{\delta}_{1, \frac{1}{2}+}},
\end{equation}
and
\begin{equation}\label{sy-m4-1}
|\Lambda_4(\tilde{M}_4; u_1, \cdots, u_4)|\lesssim \prod\limits_{j=1}^{4}\|Iu_j\|_{H^1}.
\end{equation}

As in the single mKdV equation, for given $\delta >0$, with the use of \eqref{sec-s3} and \eqref{sy-M6.1}, it is easy to obtain the following almost conservation law
\begin{equation}\label{sy-alms}
\begin{split}
|E^2(u,v)(\delta)|&\leq |E^2(\phi,\psi)|+CN^{-3} \|Iu\|_{X^{\delta}_{1, \frac{1}{2}+}}^2\|Iv\|_{X^{\delta}_{1, \frac{1}{2}+}}^4\\
&\leq |E^2(\phi,\psi)|+CN^{-3} \|(Iu, Iv)\|_{Z^{\delta}_{1, \frac{1}{2}+}}^6.
\end{split}
\end{equation}

In what follows, we present a variant of the local well-posedness result for the mKdV system \eqref{ivp-sy} after introducing the smoothing operator $I$.
\begin{theorem}\label{sy-variant}
Let $s\geq \frac14$, then for any $(\phi, \psi)$ such that $(I\phi, I\psi)\in Y^1$, there exist
$\delta = \delta(\|I\phi\|_{H^1}, \|I\psi\|_{H^1})$ (with $\delta(\rho)\to \infty$ as $\rho\to 0$) and a unique solution to the IVP \eqref{ivp-sy} in the time interval $[0, \delta]$. Moreover, the solution satisfies the estimate
\begin{equation}\label{var-sy1}
\|(Iu, Iv)\|_{Z^{\delta}_{1, b}}\lesssim \|(I\phi, I\psi)\|_{Y^1}, \qquad b>\frac12,
\end{equation}
and the local existence time $\delta$ can be chosen satisfying
\begin{equation}\label{del-sy}
\delta \lesssim \|(I\phi, I\psi)\|_{Y^1}^{-\frac2\theta},
\end{equation}
where $\theta>0$ is as in the proof of Theorem \ref{loc-sys}.
\end{theorem}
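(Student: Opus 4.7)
The plan is to parallel the proof of Theorem \ref{local-variant} by grafting the interpolation lemma onto the fixed-point scheme of Theorem \ref{loc-sys}. Since $I$ is a Fourier multiplier that commutes with $U(t)$ and $\partial_x$, I will apply $I$ to the Duhamel formulation \eqref{contrac-s2} and set up a contraction for $(Iu, Iv)$ in the ball
\[
\mathcal{X}_a^1 = \bigl\{(w_1, w_2) \in Z_{1,b};\ \|(w_1, w_2)\|_{Z_{1,b}} < a\bigr\}
\]
with $a = 2C\|(I\phi, I\psi)\|_{Y^1}$ and $b > 1/2$ chosen as in the proof of Theorem \ref{loc-sys}.

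The first step is to record the mixed trilinear estimates
\[
\|(uv^2)_x\|_{X_{s,b'}} \lesssim \|u\|_{X_{s,b}}\|v\|_{X_{s,b}}^2, \qquad \|(u^2v)_x\|_{X_{s,b'}} \lesssim \|u\|_{X_{s,b}}^2\|v\|_{X_{s,b}},
\]
valid for $s \geq 1/4$ and $-1/2 < b' \leq 0 \leq b < b'+1$ with $b > 1/2$. These are already implicit in the proof of Theorem \ref{loc-sys} and can be deduced by polarization of \eqref{tlin} from Lemma \ref{lemma1} (following \cite{Tao}), since the underlying $X_{s,b}$ multilinear machinery does not require the three factors to coincide. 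Then, applying the interpolation lemma (Lemma 12.1 of \cite{CKSTT-5}) to each of these bounds upgrades them to the $I$-level with constant independent of $N$:
\[
\|I(uv^2)_x\|_{X_{1,b'}} \lesssim \|Iu\|_{X_{1,b}}\|Iv\|_{X_{1,b}}^2, \qquad \|I(u^2v)_x\|_{X_{1,b'}} \lesssim \|Iu\|_{X_{1,b}}^2\|Iv\|_{X_{1,b}}.
\]
This is the exact analog of step \eqref{tlin-I} for the scalar mKdV.

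The second step combines the linear and inhomogeneous bounds \eqref{lin.1}, \eqref{nlin.1} with the upgraded trilinear estimates to reproduce the $I$-version of \eqref{sy-3},
\[
\|(Iu, Iv)\|_{Z_{1,b}} \leq C\|(I\phi, I\psi)\|_{Y^1} + C\delta^{\theta}\|(Iu, Iv)\|_{Z_{1,b}}^3, \qquad \theta = 1-b-b' > 0,
\]
together with the analogous difference estimate. Choosing $a = 2C\|(I\phi, I\psi)\|_{Y^1}$ and $\delta$ so that $C\delta^{\theta}a^2 \leq 1/2$, i.e. $\delta \lesssim \|(I\phi, I\psi)\|_{Y^1}^{-2/\theta}$, turns the map into a contraction on $\mathcal{X}_a^1$, producing a unique fixed point in $Z_{1,b}^{\delta}$. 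The bound \eqref{var-sy1} and the lifespan estimate \eqref{del-sy} then follow immediately from the choice of $a$ and $\delta$.

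The main obstacle would be obtaining the $N$-uniformity of the upgraded trilinear bounds, but that is exactly the purpose of the interpolation lemma of \cite{CKSTT-5} and requires nothing beyond what was already used in the scalar case. With those bounds in hand the remainder of the argument is a cosmetic modification of the proof of Theorem \ref{loc-sys}, so I will omit the routine repetitions of the symmetry argument between the two components.
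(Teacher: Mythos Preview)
Your proposal is correct and follows essentially the same approach as the paper: upgrade the trilinear estimate to the $I$-level via the interpolation lemma of \cite{CKSTT-5} (obtaining the analog of \eqref{tlin-I} for the mixed products $uv^2$ and $u^2v$) and then rerun the contraction argument of Theorem \ref{loc-sys} in $Z_{1,b}$. In fact you have written out more detail than the paper, which simply states that the proof ``follows exactly as in Theorem \ref{loc-sys} using the trilinear estimate \eqref{tlin-I}'' and omits the rest.
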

\begin{proof}
 The proof of this result follows exactly as in Theorem \ref{loc-sys}
 using the trilinear estimate \eqref{tlin-I}. So, we omit the details.
\end{proof}

Now we are in position to supply the proof of the global well-posedness result for the mKdV system \eqref{ivp-sy}.
\begin{proof}[Proof of Theorem \ref{cmkdvglob}]
The idea of proof  is  similar to the one we used to prove Theorem \ref{mkdvglob}. For the sake of clearness we give all details involved in the proof.
Here too, we are interested in extending the local solution to the IVP \eqref{ivp-sy} to the interval $[0, T]$ for any arbitrary $T>0$.

The IVP \eqref{ivp-sy} is invariant under scaling, i.e., if $(u(x,t), v(x,t))$ solves the IVP \eqref{ivp-sy} with initial data $(\phi(x), \psi(x))$ then for $1<\lambda<\infty$, so does $(u^{\lambda}(x,t), v^{\lambda}(x,t))$ with initial data $(\phi^{\lambda}(x), \psi^{\lambda}(x))$; where  $u^{\lambda}(x,t)= \frac1\lambda u(\frac x\lambda, \frac t{\lambda^3})$, $\phi^{\lambda}(x)=\frac1\lambda\phi(\frac x\lambda)$ and similar for $v$ and $\psi$.

With scaling argument introduced above, we need to extend the solution $(u^{\lambda}, v^{\lambda})$ to the bigger time interval $[0, \lambda^3T]$.

Observe that
\begin{equation}\label{sy-g-1}
\|(\phi^{\lambda}, \psi^{\lambda})\|_{Y^s}
\lesssim \frac1{\lambda^{s+\frac 12}}\|(\phi, \psi)\|_{Y^s}.
\end{equation}
From this observation and \eqref{sm-1} we have that
\begin{equation}\label{sy-g-2}
\|(I\phi^{\lambda}, I\psi^{\lambda})\|_{Y^1}\lesssim N^{1-s}\frac1{\lambda^{s+\frac 12}}\|(\phi, \psi)\|_{Y^s}.
\end{equation}

If we choose the parameter $\lambda=\lambda(N)$ suitable, we can make $\|(I\phi^{\lambda}, I\phi^{\lambda})\|_{Y^1}$ as small as we please. In fact, by choosing
\begin{equation}\label{sy-g-4}
\lambda \sim N^{\frac{2(1-s)}{1+2s}},
\end{equation}
we can make
\begin{equation}\label{sy-g-5}
\|(I\phi^{\lambda}, I\phi^{\lambda})\|_{Y^1}\leq \epsilon.
\end{equation}

Now, from \eqref{sy-g-5} and \eqref{del-sy}, we can guarantee that the rescaled solution $Iu^{\lambda}$ exists in the time interval $[0, 1]$.

Moreover, for this choice of $\lambda$, from \eqref{me-s3}, with the use of Plancherel identity, \eqref{sy-m4-1}   and \eqref{sy-g-5}, we have
\begin{equation}\label{sy-g-6}
|E^2(\phi^{\lambda}, \psi^{\lambda})|\lesssim \|(I\phi^{\lambda}, I\psi^{\lambda})\|_{Y^1}^2 + \|(I\phi^{\lambda}, I\psi^{\lambda})\|_{Y^1}^4\leq \epsilon^2+\epsilon^4\lesssim \epsilon^2.
\end{equation}

Using the almost conservation law \eqref{sy-alms} for the modified energy, \eqref{var-sy1}, \eqref{sy-g-5} and \eqref{sy-g-6},  we obtain
\begin{equation}\label{sy-g-7}
\begin{split}
|E^2(u^{\lambda}, v^{\lambda})(1)|&\lesssim |E^2(\phi^{\lambda},\phi^{\lambda} )| +N^{-3}\|(Iu^{\lambda}, Iu^{\lambda})\|_{Z_{1, \frac12+}^1}^6\\
&\lesssim \epsilon^2+N^{-3}\epsilon^6\\
&\lesssim \epsilon^2+N^{-3}\epsilon^2.
\end{split}
\end{equation}

From \eqref{sy-g-6}, it is clear that we can iterate this process $N^3$ times before doubling the modified energy $|E^2(u^{\lambda})|$. Therefore, by taking $N^3$ times steps if size $O(1)$, we can extend the rescaled solution to the interval $[0, N^3]$. As we are interested in extending the the solution to the interval $[0, \lambda^3T]$, we must select $N=N(T)$ such that $\lambda^3T\leq N^3$. Therefore, with the choice of $\lambda$ in \eqref{g-4}, we must have
\begin{equation}\label{sy-g-8}
TN^{\frac{3-12s}{1+2s}}\leq c.
\end{equation}

Hence, for arbitrary $T>0$ and large $N$, \eqref{sy-g-8} is possible if $s>\frac14$. This completes the proof of the theorem.
\end{proof}

%%%%%%%%%%%%%%%%%%%%%%%%%%%%%%%%%%%%%%%%%%%%%%%%%%%%%%%%%%%%%%%%%%
\bigskip
{\bf Acknowledgment.} Part of this research was carried while A. J. Corcho was visiting the Center
of Mathematics of University of Minho, Portugal supported by CAPES-Brazil and by the 2010 FCT-CAPES project {\it Nonlinear waves and dispersion}.
%%%%%%%%%%%%%%%%%%%%%%%%%%%%%%%%%%%%%%%%%%%%%%%%%%%%%%%%%%%%%%%%%%

\end{document}